\numberwithin{equation}{section}
\def\H{\mathcal H}
\def\M{\mathcal M}
\def\R{\mathbb R}
\def\N{\mathbb N}
\def\dist{\hbox{dist}}
\def\e{\varepsilon}
\def\s{\sigma}
\def\S{\Sigma}
\def\vphi{\varphi}
\def\om{\omega}
\def\l{\lambda}
\def\g{\gamma}
\def\k{\kappa}
\def\de{\delta}
\def\Id{{\rm Id}}
\def\spt{{\rm spt}}
\newcommand{\hd}{\mathrm{hd}}
\def\pa{\partial}
\def\ttau{\boldsymbol{\tau}}
\def\E{\mathcal{E}}
\def\F{\mathcal{F}}
\newcommand{\vol}{\mathrm{vol}\,}
\renewcommand{\a}{\alpha}
\renewcommand{\b}{\beta}
\renewcommand{\d}{\mathrm{d}}
\renewcommand{\l}{\lambda}
\renewcommand{\O}{{\mathrm O}}
\newcommand{\ov}{\overline}
\newcommand{\cc}{\subset\subset}
\newcommand{\tE}{\widetilde{\mathcal E}}
\def\aa{\mathbf{a}}
\def\bd{{\rm bd}\,}
\def\INT{{\rm int}\,}
\newtheorem{theorem}{Theorem}[section]
\newtheorem{lemma}[theorem]{Lemma}
\newtheorem{remark}{Remark}[section]
\definecolor{grey}{rgb}{.7,.7,.7}
\title[Stability for planar double-bubbles]{Sharp stability inequalities for planar double bubbles}
\author{M. Cicalese}
\address{Department of Mathematics, Technische Universit\"at M\"unchen, Boltzmannstrasse 3, 85747 Garching, GERMANY}
\email{cicalese@ma.tum.de}
\author{G. P. Leonardi}
\address{Dipartimento di Scienze Fisiche, Matematiche e Informatiche, Universit{\`a} di Modena e Reggio Emilia, Via Campi 213/b, I-41100
    Modena, ITALY}
\email{gianpaolo.leonardi@unimore.it}
\author{F. Maggi}
\address{Department of Mathematics, University of Texas at Austin, Austin, TX, USA}
\email{maggi@math.utexas.edu}
\begin{document}

\begin{abstract}
In this paper we address the global stability problem for double-bubbles in the plane. This is accomplished by combining the {\it improved convergence theorem} for planar clusters developed in \cite{CiLeMaIC1} with an ad hoc analysis of the problem, which addresses the delicate interaction between the (possible) dislocation of singularities and the multiple-volumes constraint.
\end{abstract}

\maketitle


\section{Introduction} The double-bubble theorem in $\R^3$ \cite{HutchMorganRosRitore} asserts that the total perimeter of two regions bounding given volumes is minimized by {\it standard double-bubbles}, which are the familiar soap bubble configurations where three spherical caps meet at 120 degree angles along a circle; see Figure \ref{fig sdb}. A mathematical formulation of this result in the context of finite perimeter sets is given as follows. One says that a family $\E=\{\E(h)\}_{h=1}^N$ of sets of locally finite perimeter in $\R^n$ is a {\it $N$-cluster} in $\R^n$ if $|\E(h)|>0$ for $h=1,...,N$ and $|\E(h)\cap\E(k)|=0$ for $1\le h<k\le N$. We use the term {\it double-bubble} in place of {\it $2$-cluster}. Setting $\E(0)=\R^n\setminus\bigcup_{h=1}^N\E(h)$ for the exterior chamber of $\E$, one defines the perimeter and the volume of $\E$ as
\[
P(\E)=\frac12\sum_{h=0}^NP(\E(h))\,,\qquad\vol(\E)=(|\E(1)|,...,|\E(N)|)\,,
\]
where $P(E)$ and $|E|$ denote, respectively, the distributional perimeter and the Lebesgue measure of a Lebesgue-measurable set $E\subset\R^n$. (In this way, $P(E)=\H^{n-1}(\pa E)$ whenever $E$ is an open set with Lipschitz boundary in $\R^n$, where $\H^k$ is the $k$-dimensional Hausdorff measure on $\R^n$).

For every $m_2\ge m_1>0$, there exists a unique way (up to isometries) to enclose volumes $m_1$ and $m_2$ in $\R^n$ by three $(n-1)$-dimensional spherical caps meeting at 120 degrees angles along a $(n-2)$-dimensional sphere. The corresponding shape is called the standard double-bubble in $\R^n$ (with volumes $m_1$ and $m_2$) and provides the only minimizer (up to isometries) in the isoperimetric problem
\begin{equation}
  \label{isoperimetric problem with two chambers}
  \inf\big\{P(\E):\vol(\E)=(m_1,m_2)\big\}\,,\qquad m_2\ge m_1>0\,,
\end{equation}
as shown in \cite{small_doublebubble} when $n=2$, in \cite{HutchMorganRosRitore} when $n=3$, and in \cite{Reichardt} when $n\ge 4$.
\begin{figure}
  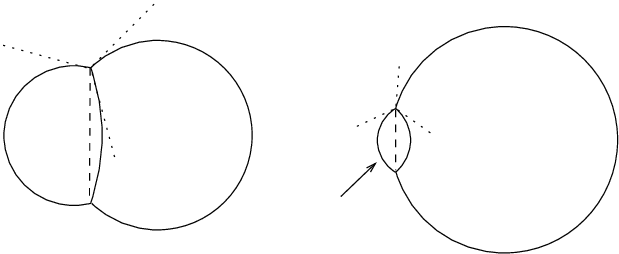\caption{{\small Standard double-bubbles: three $(n-1)$-dimensional spherical caps meeting at 120 degrees angles along a $(n-2)$-dimensional sphere (depicted by a dashed line).}}\label{fig sdb}
\end{figure}
In other words, if $\E_0$ denotes a generic reference standard double-bubble in $\R^n$, then
\begin{equation}
  \label{isoperimetric inequality with two chambers}
  P(\E)\ge P(\E_0)\,,\qquad\mbox{for every double-bubble $\E$ with $\vol(\E)=\vol(\E_0)$}\,,
\end{equation}
with equality if and only if $\E=\E_0$ modulo isometries. Our goal here is, in the planar case $n=2$, to strengthen this isoperimetric inequality in two directions. Our first result is the following sharp quantitative form of \eqref{isoperimetric inequality with two chambers}:

\begin{theorem}[Global stability inequalities]\label{thm main 2}
  If $m_2\ge m_1>0$, then there exists $\k>0$ depending on $m_1$ and $m_2$ only such that, if $\E$ is a planar double-bubble with $\vol(\E)=\vol(\E_0)=(m_1,m_2)$, then, up to isometries,
  \begin{equation}\label{global stability double-bubble}
  P(\E)\ge P(\E_0)\Big\{1+\k\,\Big( |\E(1)\Delta \E_0(1)|+|\E(2)\Delta \E_0(2)|\Big)^2\Big\}\,.
  \end{equation}
\end{theorem}

\begin{remark}\label{remark sharp}
  {\rm We stress the global character of \eqref{global stability double-bubble}, that is to say, $\E$ does not need to be a small perturbation of $\E_0$, or to be parameterized on $\E_0$ in any sense. Moreover, the decay rate in \eqref{global stability double-bubble} is sharp: if $\vphi:[0,\infty)\to[0,\infty)$ is such that $P(\E)\ge P(\E_0)(1+\vphi(\sum_{i=1}^2|\E(i)\Delta\E_0(i)|))$ for every planar double-bubble $\E$ with $\vol(\E)=\vol(\E_0)=(m_1,m_2)$, then there exist $C\ge 0$ and $t_0>0$ such that $\vphi(t)\le C\,t^2$ for every $t\le t_0$; see the discussion before Theorem \ref{thm reduction} below.}
\end{remark}


The typical situation in which we expect to observe double-bubbles $\E$ whose perimeter is close to that of a standard double-bubble $\E_0$ with $\vol(\E_0)=\vol(\E)$, is when $\E$ is the solution to a geometric variational problem sufficiently close to \eqref{isoperimetric problem with two chambers}, like
\begin{equation}
  \label{isoperimetric problem with two chambers plus potential}
  \inf\Big\{P(\E)+\b\,\int_{\E(1)\cup\E(2)}J(x)\,dx:\vol(\E)=(m_1,m_2)\Big\}\,,\qquad\mbox{$\beta>0$ small}\,,
\end{equation}
where $J$ is the density of some potential energy (see also \cite{renwei} for an account on the interaction between the cluster perimeter and a nonlocal repulsive potential). Of course one expects such minimizers to be close to standard double bubbles in a much stronger sense than the one expressed in \eqref{global stability double-bubble}, and we obtain such a quantitative estimate in the following theorem.

\begin{theorem}[Perturbed minimizing clusters]
  \label{thm main perturbed}
  If $m_2\ge m_1>0$ and $J:\R^2\to\R$ is a continuous function with $J(x)\to\infty$ as $|x|\to\infty$, then there exist $C_0>0$ and $\b_0>0$, depending on $m_1$, $m_2$, and $J$ only, with the following property. If $\E_\b$ is a minimizer in the variational problem \eqref{isoperimetric problem with two chambers plus potential} with $\b\in(0,\b_0)$, then there exists a standard double-bubble $\E_0$ with $\vol(\E_0)=(m_1,m_2)$ and a $C^{1,1}$-diffeomorphism $f_\b$ between $\pa\E_0$ and $\pa\E_\b$ such that
  \[
  \|f_\b-\Id\|_{C^0(\pa\E_0)}^3+\|\nabla f_\b-\Id\|_{C^0(\pa\E_0)}^6\le C_0\,\beta\,.
  \]
\end{theorem}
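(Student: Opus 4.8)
\emph{Strategy.} The plan is to deduce both inequalities from Theorem~\ref{thm main 2}, from the fine structure of minimizers of \eqref{isoperimetric problem with two chambers plus potential} established in \cite{CicaleseLeonardiMaggiPART1}, and from a one-dimensional interpolation inequality. The upper bound is elementary: since $g$ is coercive there is $R=R(m_1,m_2,g)$ such that, for $\b$ small, $\E_\b\subset B_R$, and so is every standard double bubble $\E_0$ with $\vol(\E_0)=(m_1,m_2)$ once translated inside $B_R$; testing \eqref{isoperimetric problem with two chambers plus potential} with such an $\E_0$ yields
\begin{align*}
P(\E_\b)&\le P(\E_0)+\b\int_{\E_0(1)\cup\E_0(2)}g-\b\int_{\E_\b(1)\cup\E_\b(2)}g\ \le\ P(\E_0)+2\b\,|B_R|\,\sup_{B_R}|g|\,,
\end{align*}
so $P(\E_\b)-P(\E_0)\le\b/\k_0$ as soon as $\k_0\le(2|B_R|\sup_{B_R}|g|)^{-1}$.

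\emph{Setup for the lower bound.} Fix $\b$ small enough for \cite{CicaleseLeonardiMaggiPART1} to apply, so that $\pa\E_\b$ consists of three $C^{1,1}$ arcs meeting in threes at two singular points and is the normal graph, over $\pa\E_0'$, of a function $h$, for some standard double bubble $\E_0'$ with $\vol(\E_0')=(m_1,m_2)$, $\|h\|_{C^{1,1}(\pa\E_0')}\le M=M(m_1,m_2,g)$ (the uniform curvature bound) and $\|h\|_{C^1(\pa\E_0')}\to0$ as $\b\to0^+$. We may then replace $\E_0'$ by a standard double bubble $\E_0$ realizing the Fraenkel asymmetry, $\a(\E_\b)=\d(\E_\b,\E_0)$ (which exists, the minimization being over the compact group $\O(2)$ and, since $\E_\b\subset B_R$, a bounded set of translations): indeed, $\|h\|_{C^0}\to0$ forces $\E_0'$ to be $L^1$-close to $\E_\b$, hence so is the closest standard double bubble $\E_0$, hence $\E_0$ and $\E_0'$ are $L^1$-close; since standard double bubbles with volumes $(m_1,m_2)$ form a finite-dimensional family on which $L^1$-proximity is equivalent to $C^1$-proximity of the boundaries, $\E_0=\gamma(\E_0')$ for an isometry $\gamma$ close to $\Id$. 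Thus $\pa\E_\b$ is still the normal graph over $\pa\E_0$ of a function, again denoted $h$, with $\|h\|_{C^{1,1}(\pa\E_0)}\le M'$ and $\|h\|_{C^1(\pa\E_0)}\to0$; we take $f_\b\colon\pa\E_0\to\pa\E_\b$ to be the map $x\mapsto x+h(x)\,\nu(x)$ (with $\nu$ the normal to $\pa\E_0$), modified in fixed neighbourhoods of the two singular points so as to be a genuine $C^{1,1}$-diffeomorphism as in \cite{CicaleseLeonardiMaggiPART1}, so that $\|f_\b-\Id\|_{C^1(\pa\E_0)}\le C\|h\|_{C^1(\pa\E_0)}$ and $\|f_\b-\Id\|_{C^{1,1}(\pa\E_0)}\le CM'$.

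\emph{The chain of inequalities.} With this choice we estimate
\begin{align*}
P(\E_\b)-P(\E_0)&=P(\E_0)\,\de(\E_\b)\ \ge\ \k\,P(\E_0)\,\a(\E_\b)^2\ =\ \k\,P(\E_0)\,\d(\E_\b,\E_0)^2\\
&\ge\ c_1\,\|h\|_{L^1(\pa\E_0)}^2\ \ge\ c_2\,\|h\|_{C^1(\pa\E_0)}^6\ \ge\ c_3\,\|f_\b-\Id\|_{C^1(\pa\E_0)}^6\,,
\end{align*}
where the first identity is the definition of $\de$, the first inequality is Theorem~\ref{thm main 2} (applicable since $\vol(\E_\b)=\vol(\E_0)$), the fourth inequality is the interpolation estimate $\|u\|_{C^1(I)}\le C\,\|u\|_{C^{1,1}(I)}^{2/3}\|u\|_{L^1(I)}^{1/3}$ applied on each of the three arcs composing $\pa\E_0$ (valid once $\|h\|_{L^1}$ is small, using $\|h\|_{C^{1,1}}\le M'$), and the last inequality is $\|f_\b-\Id\|_{C^1}\le C\|h\|_{C^1}$. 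The only substantial point is the third inequality, $\d(\E_\b,\E_0)\ge c\,\|h\|_{L^1(\pa\E_0)}$: for $\b$ small, the symmetric difference of the $C^{1,1}$-close clusters $\E_\b$ and $\E_0$ is, away from the two singular points, the essentially disjoint union of the strips swept by the matching boundary arcs, and these contribute to $\d(\E_\b,\E_0)$ \emph{without cancellation} precisely because a symmetric difference is involved, while the contribution of fixed neighbourhoods of the singular points is controlled by a separate, elementary estimate. It is essential here that $\E_0$ be asymmetry-optimal: this rules out a spurious contribution to $\|h\|_{L^1}$ arising from an infinitesimal rigid motion of the base bubble, which would be invisible to $\a(\E_\b)$.

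\emph{Conclusion and main obstacle.} Combining the displayed chain with the upper bound, and finally shrinking $\k_0$ and $\b_0$ (depending only on $m_1$, $m_2$, $g$) so as to absorb all the constants above and to meet all smallness requirements of \cite{CicaleseLeonardiMaggiPART1}, yields the two-sided estimate. I expect the main obstacle to be exactly the inequality $\d(\E_\b,\E_0)\ge c\,\|h\|_{L^1(\pa\E_0)}$: this requires both a careful treatment of the three-curve singularities of the double bubble and the reduction to the asymmetry-minimizing base bubble carried out above.
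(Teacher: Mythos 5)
Your proposal is correct in spirit but takes a genuinely different route from the paper's, and in doing so it creates for itself an extra nontrivial step that the paper avoids. The paper's lower bound does not pass through the Fraenkel--asymmetry stability inequality of Theorem~\ref{thm main 2} at all. Instead, it uses \cite{CicaleseLeonardiMaggiPART1} to write $\E_\b$ as an $(\e_\b,\s_\b)$-perturbation of a suitable $\E_0$ with radial graphs $u_{\b,k}$, and then invokes Theorem~\ref{Fuglede double-bubble} \emph{directly}, which yields the $L^2$-control
\[
P(\E_\b)-P(\E_0)\ \ge\ \k_1\Big(\s_\b^2+\sum_{k=0}^2 r_k\int_{-\theta_k}^{\theta_k}u_{\b,k}^2\Big)\,.
\]
The interpolation Lemma~\ref{lemma interpolation} is then applied to each $u_{\b,k}$ (whose $L^2$-norm is already controlled), and the $C^1$-distance of the diffeomorphism from the identity is read off from \eqref{o via}. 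Since Theorem~\ref{Fuglede double-bubble} already gives $L^2$-control of the graphs, there is no need to (i) realize the Fraenkel asymmetry by an optimal $\E_0$, (ii) transfer the $C^{1,1}$-graph structure from the $\E_0'$ of \cite{CicaleseLeonardiMaggiPART1} to that optimal $\E_0$, or (iii) prove the reverse $L^1$-bound $\d(\E_\b,\E_0)\ge c\,\|h\|_{L^1(\pa\E_0)}$. These three points are exactly the extra load your route carries.

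Of these, the genuinely missing step in your argument is the third one, as you yourself flag. It is not wrong (for a small $C^1$ normal graph the symmetric difference of the two clusters is essentially an area swept out along the normal, with the singular corners contributing a comparable amount once $\|h\|_{C^1}$ is small, so the bound should hold even for a non-optimal base), but it needs an actual argument, and it is precisely the delicate part you do not carry out. A small further remark: optimality of $\E_0$ is needed for the \emph{identity} $\a(\E_\b)=\d(\E_\b,\E_0)$, not for the inequality $\d(\E_\b,\E_0)\ge c\,\|h\|_{L^1}$, so the role you assign to optimality in your last paragraph is slightly misplaced; in fact, once that inequality is available for an arbitrary nearby standard double bubble, you could apply it to the $\E_0'$ provided by \cite{CicaleseLeonardiMaggiPART1} and dispense with the reduction to the optimal base altogether, at the cost of still passing through the weaker $L^1$-control and hence through the Cauchy--Schwarz loss inherent in $\a(\E_\b)^2\lesssim\|h\|_{L^2}^2$. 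In short: your approach works, but the paper's choice to use the sharper $(\e,\s)$-Poincar\'e inequality rather than the $L^1$-based Theorem~\ref{thm main 2} is what makes the argument clean and keeps the geometric bookkeeping near the singular points from entering the picture.
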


We now comment on the related literature on quantitative isoperimetric inequalities, and on the strategy of proof of our main results. After the pioneering contributions by Bernstein \cite{bernstein} and Bonnesen \cite{bonnesen}, the analysis of global stability problems has received a renewed attention in recent years, with the proof of the sharp stability inequality for the Euclidean isoperimetric problem \cite{Fuglede,Fuglede93,hallhaymanweitsman,hall,fuscomaggipratelli,CicaleseLeonardi,fuscogellipisante,fuscojulin}, the Wulff isoperimetric problem \cite{FigalliMaggiPratelliINVENTIONES}, the Gaussian isoperimetric problem \cite{cianchifuscomaggipratelliGAUSS,mossel,barchiesibrancolinijulin}, Plateau-type problems \cite{dephilippismaggi}, fractional isoperimetric problems \cite{fuscomillotmorini}, and isoperimetric problems in higher codimension \cite{bogelainduzaarfusco}. (This list is probably incomplete, and it does not mention contributions to stability problems for functional inequalities.)

Among the various methods developed to deal with global stability problems in the above mentioned papers, the {\it selection principle} method from \cite{CicaleseLeonardi} has proven to be the more widely applicable. At the heart of this approach lies the use of regularity theory to obtain what we call {\it improved convergence theorems}. Referring to the introduction of \cite{CiLeMaIC1} for a more detailed account on this kind of results, we just notice here that by exploiting the main result from \cite{CiLeMaIC1} in combination with a selection principle we can reduce the proof of \eqref{global stability double-bubble} to the case when $\pa\E=f(\pa\E_0)$ for a $C^{1,1}$-diffeomorphism $f$ between $\pa\E_0$ and $\pa\E$ such that $\|f-\Id\|_{C^1(\pa\E_0)}$ is as small as needed. In the case of the standard isoperimetric problem, following Fuglede \cite{Fuglede,Fuglede93}, one can directly address this ``reduced'' stability problem by an expansion in spherical harmonics, which is elementary if $n=2$.

In the case of double-bubbles, even when $n=2$, the situation is much subtler, due to the presence of singularities and of the multiple-volumes constraint. We shall address this problem by combining Fourier series arguments in the spirit of Fuglede with the solution of certain one-dimensional variational problems, to proceed through  a case by case analysis. Different cases will correspond to different behaviors of the perturbed interfaces, based for example on the relative size between their $L^2$-mean deviation and their $L^2$-distance from the corresponding interfaces of the reference standard double-bubble. The resulting argument, although based on rather elementary mathematical tools, sheds light on the non-trivial interactions between the three interfaces, on which the global stability of standard double-bubbles ultimately depends. As an entirely analogous structure underlies the stability problem for standard double-bubbles in higher dimensions, we expect the methods of this paper to be useful also in that case.

We notice that, at present, there is only another instance of isoperimetric problem with multiple volume constraints whose minimizers are explicitly known. This is the case of the planar triple bubble problem, addressed by Wichiramala in \cite{wichi}. It is reasonable to expect that by further exploiting the arguments developed in this paper, and again in combination with the improved convergence theorem from \cite{CiLeMaIC1}, one should be able to obtain results like Theorem \ref{thm main 2} and Theorem \ref{thm main perturbed} in the case of planar triple bubbles too.

The paper is organized as follows. In section \ref{section reduction to small diffeo} we reduce the proof of Theorem \ref{thm main 2} to the case of small diffeomorphic images of $\E_0$. In section \ref{section final} we introduce the notion of $(\e,\s)$-perturbation of a standard double-bubble, and prove Theorem \ref{thm main 2} and Theorem \ref{thm main perturbed} assuming Theorem \ref{thm main 2} on $(\e,\s)$-perturbations. Finally, in section \ref{section small perturbations}, we address the proof of Theorem \ref{thm main 2} on $(\e,\s)$-perturbations.

\subsection*{Acknowledgements} GPL is supported by the GNAMPA-INdAM project {\it Problemi di regolarit\`a e teoria geometrica della misura in spazi metrici} and by the PRIN 2010 M.I.U.R. project {\it Calcolo delle Variazioni}. FM is supported by NSF-DMS Grant 1265910 and  NSF-DMS FRG Grant 1361122.


\section{Reduction to small perturbations}\label{section reduction to small diffeo}

\subsection{Sets of finite perimeter, clusters, and improved convergence}\label{section sofp} We describe bubble clusters in the framework of the theory of sets of finite perimeter. Referring to \cite{maggiBOOK} for more details, given a set $E$ of locally finite perimeter in $\R^n$, we denote by $\mu_E=\nu_E\,\H^{n-1}\llcorner\pa^*E$ its Gauss--Green measure, where $\nu_E$ and $\pa^*E$ are the measure-theoretic outer unit normal and the reduced boundary of $E$, respectively. In this way the perimeter of $E$ relative to the Borel set $F$ is $P(E;F)=|\mu_E|(F)=\H^{n-1}(F\cap\pa^*E)$, and we set $P(E)=P(E;\R^n)$. We work under the normalization by a Lebesgue negligible set which ensures that
\[
\ov{\pa^*E}=\spt\,\mu_E=\big\{x\in A:0<|E\cap B_{x,r}|<\om_n\,r^n\quad\forall r>0\big\}=\pa E\,,
\]
Given a $N$-cluster $\E$ in $\R^n$, we set
\[
\pa^*\E=\bigcup_{h=1}^N\pa^*\E(h)\,,
\qquad\pa\E=\bigcup_{h=1}^N\pa\E(h)\,,\qquad\S(\E)=\pa\E\setminus\pa^*\E\,,
\]
so that $\ov{\pa^*\E}=\pa\E$. We set $\d(\E,\F)=(1/2)\,\sum_{h=0}^N|\E(h)\Delta\F(h)|$ for the $L^1$-distance between the $N$-clusters $\E$ and $\F$, and say that $\E$ is a $(\Lambda,r_0)$-minimizing cluster in $\R^n$ if
\begin{equation}
  \label{lambdar0 minimizer}
  P(\E)\le P(\F)+\Lambda\,\d(\E,\F)\,,
\end{equation}
whenever $\E(h)\Delta\F(h)\cc B_{x,r_0}$ for some $x\in\R^n$ and every $h=1,...,N$. Referring to \cite[Section 4]{CiLeMaIC1} for an account on the regularity properties of $(\Lambda,r_0)$-minimizing clusters in $\R^n$ for $n$ arbitrary, here we just need to recall what happens when $n=2$. Let us say that $\E$ is a $C^{k,\a}$-cluster in $\R^2$ ($k\in\N$, $\a\in(0,1]$) if there exist a locally finite family $\{\g_i\}_{i\in I}$ of closed $C^{k,\a}$-curves with boundary in $\R^2$ and a locally finite family of points $\{p_j\}_{j\in J}$ such that
\[
\pa\E=\bigcup_{i\in I}\g_i\,,\qquad\pa^*\E=\bigcup_{i\in I}\INT(\g_i)\,,\qquad \S(\E)=\bigcup_{i\in I}\bd(\g_i)=\bigcup_{j\in J}\{p_j\}\,,
\]
where $\INT(\g)$ and $\bd(\g)$ denote the interior and the boundary points of the curve $\g$. If $\E$ is a $(\Lambda,r_0)$-minimizing cluster in $\R^2$ then $\E$ is a $C^{1,1}$-cluster in $\R^2$: moreover, each $\g_i$ to have distributional curvature bounded by $\Lambda$, and each $p_j$ to be a boundary point of exactly three curves from $\{\g_i\}_{i\in I}$, which form three 120 degrees angles at $p_j$. For a proof of all these facts we refer, for example, to \cite[Theorem 5.2]{CiLeMaIC1}.

Given a $C^{1,1}$-cluster $\E$ in $\R^2$ and a map $f:\pa\E\to\R^2$ one says that $f\in C^{1,1}(\pa\E;\R^2)$ if $f$ is continuous on $\pa\E$ and
\[
\|f\|_{C^{1,1}(\pa\E)}:=\sup_{i\in I}\|f\|_{C^{1,1}(\g_i)}<\infty\,;
\]
moreover, given $C^{1,1}$-clusters $\E$ and $\F$, one says that $f$ is a $C^{1,1}$-diffeomorphism between $\pa\E$ and $\pa\F$ if $f$ is an homeomorphism between $\pa\E$ and $\pa\F$ with $f\in C^{1,1}(\pa\E;\R^2)$, $f^{-1}\in C^{1,1}(\pa\F;\R^2)$ and $f(\S(\E))=\S(\F)$. Finally, given a map $f:\pa\E\to\R^2$, and denoted by $\nu:\pa^*\E\to S^1$ a vector field with $\nu(x)\in\{\nu_{\E(h)}(x),\nu_{\E(k)}(x)\}$ for every $x\in\pa^*\E(h)\cap \pa^{*}\E(k)$, we define the tangential component $\ttau_\E:\pa^*\E\to\R^2$ of $f$ with respect to $\E$ by setting
\[
\ttau_\E f(x)=f(x)-(f(x)\cdot\nu(x))\nu(x)\qquad x\in\pa^*\E\,.
\]
(Note that the continuity of $\nu$ is not essential here, as $\ttau_\E f$ depends quadratically from $\nu$.) The following result is \cite[Theorem 1.5]{CiLeMaIC1}.

\begin{theorem}\label{thm IC1}
  Given $\Lambda\ge0$, $r_0>0$ and a bounded $C^{2,1}$-cluster $\E_0$ in $\R^2$, there exist positive constants $\mu_0$ and $C_0$ (depending on $\Lambda$ and $\E$) with the following property.

  If $\{\E_k\}_{k\in\N}$ is a sequence of $(\Lambda,r_0)$-minimizing clusters in $\R^2$ such that $\d(\E_k,\E_0)\to 0$ as $k\to\infty$, then  for every $\mu<\mu_0$ there exist $k(\mu)\in\N$ and a sequence of maps $\{f_k\}_{k\ge k(\mu)}$ such that each $f_k$ is a $C^{1,1}$-diffeomorphism between $\pa\E_0$ and $\pa\E_k$ with
  \begin{eqnarray}\label{hit 1}
  \|f_k\|_{C^{1,1}(\pa\E_0)}&\le&C_0\,,
  \\\label{hit 2}
  \lim_{k\to\infty}\|f_k-\Id\|_{C^1(\pa\E_0)}&=&0\,,
  \\\label{hit 3}
    \ttau_{\E_0}(f_k-\Id)&=&0\,,\qquad\mbox{on $\pa\E_0\setminus I_\mu(\S(\E_0))$}\,,
  \\\label{hit 4}
  \|\ttau_{\E_0}(f_k-\Id)\|_{C^1(\pa^*\E_0)}&\le&\frac{C_0}\mu\,\|f_k-\Id\|_{C^0(\S(\E_0))}\,.
  \end{eqnarray}
\end{theorem}

\subsection{A selection principle}\label{section select principle} Let now $\E_0$ denote a reference standard double-bubble in $\R^2$ with $\vol(\E_0)=(m_1,m_2)$, and for every planar double-bubble $\E$ set
\begin{gather*}
\de(\E)=P(\E)-P(\E_0)\,,
\\
\a(\E)=\inf\big\{\d(\E,f(\E_0)): \mbox{$f:\R^2\to\R^2$ is an isometry}\big\}\,,
\end{gather*}
and
\begin{equation}
  \label{kappa ottimale}
  \kappa(\E_0)=\inf\big\{\liminf_{k\to\infty}\frac{\de(\E_k)}{\a(\E_k)^2}:\vol(\E_k)=(m_1,m_2)\,,\a(\E_k)>0\,,\lim_{k\to\infty}\d(\E_k,\E_0)=0\big\}\,.
\end{equation}
Notice that, by pushing the interfaces of $\E_0$ as depicted in Figure \ref{fig pushing},
\begin{figure}
  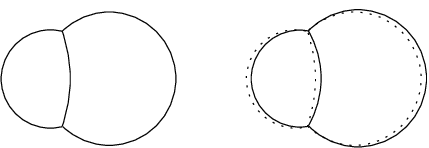\caption{\small{The deformations $\E_t$ of $\E_0$ used to prove that $\k(\E_0)<\infty$ is depicted on the right.}}\label{fig pushing}
\end{figure}
one defines a one-parameter family of double-bubbles $\{\E_t\}_{0<t<1}$ such that
\[
\vol(\E_t)=\vol(\E_0)\,,\qquad
P(\E_t)\le P(\E_0)+C\,t^2\,,\qquad \d(\E_t,\E_0)\ge C\,t\,,\qquad \forall t\in(0,1)\,;
\]
moreover, by exploiting the symmetry of $\E_t$ (see \cite[Lemma 5.2]{maggibams} for the kind of argument used here) one has
\[
\d(\E_t,\E_0)\le C\,\a(\E_t)\,,\qquad \forall t\in(0,1)\,,
\]
so that $\k(\E_0)<\infty$. This last fact shows, in particular, the sharpness of the decay rate in \eqref{global stability double-bubble} claimed in Remark \ref{remark sharp}. Now, Theorem \ref{thm main 2} is equivalent to $\k(\E_0)>0$, and Theorem \ref{thm reduction} below allows one to reduce the proof of Theorem \ref{thm main 2} to the case when $\pa\E$ is a $C^{1,1}$-diffeomorphic image of $\pa\E_0$ (in the sense of Theorem \ref{thm IC1}) by a map $f$ that is arbitrarily $C^1$-close to the identity.

\begin{theorem}\label{thm reduction} There exist positive constants $C_0$ and $\mu_0$ (depending on $m_1$ and $m_2$ only) and a sequence of planar double-bubbles $\{\E_k\}_{k\in\N}$ with $\vol(\E_k)=(m_1,m_2)$, such that
 \begin{equation}
   \label{x tesi uno}
    \inf_{k\in\N}\a(\E_k)>0\,,\qquad \lim_{k\to\infty}\d(\E_k,\E_0)= 0\,,\qquad
 \lim_{k\to\infty}\frac{\de(\E_k)}{\a(\E_k)^2}=\k(\E_0)\,,
 \end{equation}
 and such that for every $\mu\in(0,\mu_0)$ there exist $k(\mu)\in\N$ and, for each $k\ge k(\mu)$, a $C^{1,1}$-diffeomorphism $f_k$ between $\pa\E_0$ and $\pa\E_k$, in such a way that \eqref{hit 1}--\eqref{hit 4} hold.
\end{theorem}

\begin{proof} By Theorem \ref{thm selection principle part one} in Appendix \ref{appendix soft and select} there exists a sequence $\{\E_k\}_{k\in\N}$ of $(\Lambda,r_0)$-minimizing $2$-clusters in $\R^2$ with $\vol(\E_k)=(m_1,m_2)$ satisfying \eqref{x tesi uno}. Since $\d(\E_k,\E_0)\to 0$, by applying Theorem \ref{thm IC1} we find diffeomorphisms $f_k$ between $\pa\E_0$ and $\pa\E_k$ with the required properties.
\end{proof}

\section{Proofs of the main theorems}\label{section final} Given $\e>0$ and $\s\in(-1,1)$, and denoted by $\nu_{\E_0}$ a normal vector field to $\pa^*\E_0$, one says that a planar double-bubble $\E$ is an $(\e,\s)$-perturbation of $\E_0$ if $\vol(\E)=\vol(\E_0)$ and there exist $g\in C^1(\pa\E_0;\R^2)$ with
\begin{equation}\label{cool}
  g=\Id\quad\mbox{on $\S(\E_0)$}\,,\qquad (g-\Id)\cdot\nu_{\E_0}=0\quad\mbox{on $\pa^*\E_0$}\,,\qquad \|g-\Id\|_{C^1(\pa\E_0)}<\e\,,
\end{equation}
and such that $\pa\E=(1+\s)\,g(\pa\E_0)$. In the next section, see Theorem \ref{thm Fuglede double-bubble}, we show the existence of positive constants $\e_1$ and $\s_1$ such that \eqref{global stability double-bubble} hold on every $(\e,\s)$-perturbation of $\E_0$ with $\e<\e_1$ and $|\s|<\s_1$. Based on Theorem \ref{thm reduction} and Theorem \ref{thm Fuglede double-bubble} one can prove Theorem \ref{thm main 2} as follows.

\begin{proof}[Proof of Theorem \ref{thm main 2}]
  By Theorem \ref{thm reduction} and Theorem \ref{thm Fuglede double-bubble} it suffices to show that if $\{\E_k\}_{k\in\N}$ is a sequence of $(\Lambda,r_0)$-minimizing clusters such that $\d(\E_k,\E_0)\to 0$, then for every $k$ large enough $\E_k$ is an $(\e_k,\s_k)$-perturbation of $\E_0$, where $\e_k,\s_k\to 0$ as $k\to\infty$. In other words, we want to prove that, up to isometries, $\pa\E_k$ is a $C^1$-small {\it normal} perturbation of the small rescaling $(1+\s_k)\pa\E_0$ of $\pa\E_0$.

  We already know $\pa\E_k$ to be a $C^1$-small perturbation of $\pa\E_0$ with a small tangential displacement. Indeed, if $C_0$ and $\mu_0$ are as in Theorem \ref{thm IC1}, then by Theorem \ref{thm reduction} and for every $\mu<\mu_0$ we find $\{f_k\}_{k\ge k(\mu)}$ (the dependence of $f_k$ from $\mu$ is tacitly understood) such that \eqref{hit 1}--\eqref{hit 4} hold. We now exploit the existence of the maps $f_k$ to show that \eqref{cool} holds with $\E=\E_k$ for some $\s=\s_k\to 0$, $\e=\e_k\to 0$ and $g=g_k$.

  Let us set $\S(\E_0)=\{p_1,p_2\}$ and let $\{\g_i\}_{i=1}^3$ be the circular arcs such that $\pa\E_0=\bigcup_{i=1}^3\g_i$ and $\bd(\g_i)=\{p_1,p_2\}$ for $i=1,2,3$. Up to a translation of $\E_0$ (and, correspondingly, of each $\E_k$) we may assume that $p_1+p_2=0$. Setting $p_j^k=f_k(p_j^k)$, we have $\S(\E_k)=\{p_1^k,p_2^k\}$ and $p_j^k\to p_j$ by \eqref{hit 2}, so that, up to moving each $\E_k$ by an isometry (with the corresponding sequence of isometries which converges to the identity map) we entail
  \begin{equation}
    \label{hit 0}
    p_j^k=(1+\s_k)\,p_j\,,\qquad \lim_{k\to\infty}\s_k=0\,.
  \end{equation}
  If we set $\g_i^k=(1+\s_k)^{-1}\,f_k(\g_i)$, then
  \[
  (1+\s_k)^{-1}\pa\E_k=\bigcup_{i=1}^3\g_i^k\,,\qquad \bd(\g_i^k)=\{p_1,p_2\}\,.
  \]
  Thanks to \eqref{hit 1}--\eqref{hit 4}, by $\g_i^k=(1+\s_k)^{-1}\,f_k(\g_i)$, and since $\s_k\to 0$, one has:

  \medskip

  \noindent (i) if $\tau_\g:\bd(\g)\to S^1$ is the outer unit tangent vector to a curve $\g$ at its boundary points, then
  \[
  \lim_{k\to\infty}\hd(\g_i^k,\g_i)+\max_{j=1,2}|\tau_{\g_i^k}(p_j)-\tau_{\g_i}(p_j)|
  =0\,;
  \]
  moreover, by exploiting the fact that  $f_k$ parameterizes $\g_i^k$ over $\g_i$, one constructs unit normal vector fields $\nu_i^k\in C^{0,1}(\g_i^k;S^1)$ to $\g_i^k$ such that
  \[
  |\nu_i^k(x)\cdot(y-x)|\le L\,|x-y|^2\,,\qquad |\nu_i^k(x)-\nu_i^k(y)|\le L\,|x-y|\,,\qquad\forall x,y\in \g_i^k\,,
  \]
  where $L$ is independent from $k$;

  \medskip

  \noindent (ii) if we set $[\g_i]_t=\{x\in\g_i:\dist(x,\bd(\g_i))>t\}$, $t>0$, and $\psi_k=(1+\s_k)^{-1}\,(f_k-\Id)\cdot\nu_i$, then $\psi_k\in C^{1,1}([\g_i]_\mu)$ for every $i=1,2,3$ with
  \begin{gather*}
    \sup_{k\ge k(\mu)}\|\psi_k\|_{C^{1,1}([\g_i]_\mu)}\le C_0\,,\qquad \lim_{k\to\infty}\|\psi_k\|_{C^1([\g_i]_\mu)}=0\,,
\\    [\g_i^k]_{2\mu}\subset(\Id+\psi_k\nu_i)([\g_i]_\mu)\subset\g_i^k\,,
  \end{gather*}
  where $\nu_i\in C^{0,1}(\g_i;S^1)$ is a fixed outer unit normal to $\g_i$.

  \medskip

  \noindent Thanks to (i) and (ii) we can apply \cite[Theorem 3.5]{CiLeMaIC1} to construct a $C^{1,1}$-normal diffeomorphism $\hat g_i^k$ between $\g_i$ and $\g_i^k$ such that $\hat g_i^k\to\Id$ in $C^1(\g_i)$. Note that, in fact, $\hat g_i^k$ is a normal diffeomorphism as $\bd(\g_i)=\bd(\g_i^k)$, cf. with \cite[Equation (3.85)]{CiLeMaIC1}. Setting $g_k=\hat g_i^k$ on $\g_i$, we thus define a normal $C^{1,1}$-diffeomorphism between $\pa\E_0$ and $(1+\s_k)^{-1}\pa\E_k$ with $\e_k=\|g_k-\Id\|_{C^1(\pa\E_0)}\to 0$.
\end{proof}

\begin{proof}
  [Proof of Theorem \ref{thm main perturbed}] We directly focus on the case $m_2>m_1$, the case $m_2=m_1$ being analogous. Let us pick an arbitrary sequence $\beta_k\to 0^+$, and let $\E_k$ be minimizers in \eqref{isoperimetric problem with two chambers plus potential} with $\b=\b_k$. By arguing as in \cite[Proof of Theorem 1.10]{CiLeMaIC1} we prove the existence of $\Lambda\ge0$ and $r_0>0$ such that $\{\E_k\}_{k\in\N}$ is a sequence of $(\Lambda,r_0)$-minimizers such that, up to isometries, $\d(\E_k,\E_0)\to 0$. By the argument used to prove Theorem \ref{thm main 2}, we see that $\E_k$ is an $(\e_k,\s_k)$-perturbation of $\E_0$ with $\e_k,\s_k\to 0$. As a first consequence, we note that if $R>0$ is such that $\E_0(h)\subset B_R$ for $h=1,2$, then for $k$ large enough $\E_k(h)\subset B_{2R}$ for $h=1,2$, and thus by minimality of $\E_k$,
  \[
  P(\E_k)-P(\E_0)\le C\,\beta_k\,\|J\|_{C^0(B_{2R})}\,\sum_{h=1}^2\,|\E_k(h)\Delta\E_0(h)|\le C\,\beta_k\,.
  \]
  At the same time, if with the same notation of the previous proof we denote by $\{\g_i\}_{i=0}^2$ the circular arcs composing $\pa\E_0$, then there exist $u_{k,i}\in C^{1,1}_0(\g_i)$ such that
  \begin{equation}
    \label{ictp}
          \lim_{k\to\infty}\|u_{k,i}\|_{C^1(\g_i)}=0\,,\qquad \sup_{k\in\N}\|u_{k,i}''\|_{L^\infty(\g_i)}\le \Lambda\,,\qquad\forall i=0,1,2\,,
  \end{equation}
  and such that, by setting
  \[
  \bar g_k(x)=(1+\s_k)\,\big(x+u_{k,i}(x)\,\nu_i(x)\big)\,,\qquad x\in\g_i\,,
  \]
  one defines a $C^{1,1}$-diffeomorphism $\bar g_k$ between $\pa\E_0$ and $\pa\E_k$ with
  \begin{equation}
  \label{basta}
  \|\bar g_k-\Id\|_{C^j(\pa\E_0)}\le C\,\Big(|\s_k|+\sum_{i=0}^2\|u_{k,i}\|_{C^j(\g_i)}\Big)\,,\qquad j=1,2\,,
  \end{equation}
  Since $\e_k,\s_k\to 0$, for $k$ large enough we can use Theorem \ref{thm Fuglede double-bubble} to deduce that
  \begin{eqnarray}\label{ohoh}
  P(\E_k)-P(\E_0)\ge \k\,\Big(\s_k^2+\sum_{i=0}^2\int_{\g_i}\,u_{k,i}^2\Big)\,,
  \end{eqnarray}
  and then apply Lemma \ref{lemma:interpol} below to get
  \[
  \|\bar g_k-\Id\|_{C^0(\pa\E_0)}^3+\|\nabla\bar g_k-\Id\|_{C^0(\pa\E_0)}^6\le C\,\b_k\,.
  \]
  By the arbitrariness of $\beta_k$ we conclude the proof of the theorem.
  \end{proof}
  
  \begin{lemma}\label{lemma:interpol}
  If $v\in C^{1,1}([a,b])$ with $v(a)=v(b)=0$, then
  \begin{equation}\label{eq:interpol}
  \begin{split}
  C\,\|v\|_{L^{1}(a,b)}^{2/3}\,\|v''\|_{L^\infty(a,b)}^{1/3} \ge \|v\|_{C^0([a,b])}\,,
  \\
  C\,\|v\|_{L^{1}(a,b)}^{1/3}\,\|v''\|_{L^\infty(a,b)}^{2/3} \ge \|v'\|_{C^0([a,b])}\,.
  \end{split}
  \end{equation}
  \end{lemma}
  
\begin{proof} The argument is elementary and it is included just for the sake of clarity. Without loss of generality, let $x_{0}\in (a,b)$ be such that $\|v\|_{C^{0}([a,b])}=|v(x_{0})|=v(x_0)>0$. Since $v(b)=0$, there exists $\bar x\in(x_0,b]$ such that $v>0$ on $(x_0,\bar x)$ and $v(\bar x)=0$. By $v'(x_0)=0$ we find
\[
|v(x)|=v(x)\ge v(x_{0})-\frac{\|v''\|_{L^\infty(a,b)}}{2}(x-x_{0})^{2}\,,\qquad\forall x\in(x_0,\bar x)\,.
\]
The right-hand side of this inequality is positive for $x\in(x_0,x_0+r)$ where
\[
r=\Big(\frac{2\|v\|_{C^0([a,b])}}{\|v''\|_{L^\infty(a,b)}}\Big)^{1/2}\,,
\]
hence $(x_0,x_0+r)\subset(x_0,\bar x)$, and thus
\[
\|v\|_{L^1(a,b)}\ge\int_{(x_0,x_0+r)}\Big(v(x_{0})-\frac{\|v''\|_{L^\infty(a,b)}}{2}(x-x_{0})^{2}\Big)\,dx
=\frac{2\sqrt2}3\,\frac{\|v\|_{C^0([a,b])}^{3/2}}{\|v''\|_{L^\infty(a,b)}^{1/2}}\,.
\]
which is the first estimate in \eqref{eq:interpol}. Now we take $x_{1}\in [a,b]$ such that $|v'(x_{1})| = \|v'\|_{C^{0}([a,b])}$. Without loss of generality we can assume that $|v'(x_1)|=v'(x_1)>0$ and that $v(x_1)\ge0$. (Indeed, this can be achieved by possibly replacing $v$ with $-v$ and then by reflecting $v$ with respect to the mid-point of $[a,b]$. Notice that this operation may in principle change the sign of $v(x_0)$, but this will not affect our argument as we shall not need to refer to $v(x_0)$ anymore.) Since $v(b)=0$, there exists $x_{2}\in(x_{1},b)$ such that $v'=|v'|>0$ on $(x_1,x_2)$ and $v'(x_2)=0$, and thus, by $v(x_1)\ge0$, $|v|=v$ on $(x_1,x_2)$. In particular,
\begin{eqnarray*}
|v(x)|&=&v(x)\ge v(x_1)+v'(x_1)(x-x_1)-\frac{\|v''\|_{L^\infty(a,b)}}2\,|x-x_1|^2
\\
&\ge& v'(x_1)(x-x_1)-\frac{\|v''\|_{L^\infty(a,b)}}2\,|x-x_1|^2\,,\qquad\forall x\in(x_1,x_2)\,,
\end{eqnarray*}
where the right-hand side of this inequality is non-negative for $x\in(x_1,x_1+s)$, where
\[
s=\frac{2\|v'\|_{C^0([a,b])}}{\|v''\|_{L^\infty(a,b)}}\,.
\]
In particular $(x_1,x_1+s)\subset(x_1,x_2)$, and thus
\[
\|v\|_{L^1(a,b)}\ge \int_{(x_1,x_1+s)}\Big(v'(x_1)(x-x_1)-\frac{\|v''\|_{L^\infty(a,b)}}2\,|x-x_1|^2\Big)\,dx
=\frac{2}3\,\frac{\|v'\|_{C^0([a,b])}^{3}}{\|v''\|_{L^\infty(a,b)}^2}\,.
\]
\end{proof}

\section{Stability on $(\e,\s)$-perturbations}\label{section small perturbations} We now turn to the proof of Theorem \ref{thm main 2} on $(\e,\s)$-perturbations of $\E_0$, see Theorem \ref{thm Fuglede double-bubble} below. We begin by introducing some specific notation for spherical caps and sectors, and for their normal perturbation by a given function. Let $B=\{x\in\R^2:|x|<1\}$. Given $\theta\in(0,\pi)$, we define a circular arc $A(\theta)\subset \partial B$ and a circular sector $S(\theta)\subset B$ by setting
\begin{eqnarray*}
A(\theta)=\big\{x\in \R^2: |x|=1\,, x_1>\cos\theta\big\}\,,
\qquad S(\theta)=\big\{t\,x: x\in A(\theta)\,,0<t<1\big\}\,,
\end{eqnarray*}
while, given $u\in W^{1,2}_0(A(\theta))$ we denote by $A(\theta,u)\subset \R^2$ and $S(\theta,u)\subset \R^2$ the perturbed circular arc and perturbed circular sector defined as
\begin{eqnarray*}
A(\theta,u)=\big\{(1+u(x))\,x:x\in A(\theta)\big\}\,,
\qquad S(\theta,u)=\big\{t\,(1+u(x))\,x:x\in A(\theta)\,, 0<t<1\big\}\,;
\end{eqnarray*}
see Figure \ref{fig: calotta}.
\begin{figure}
  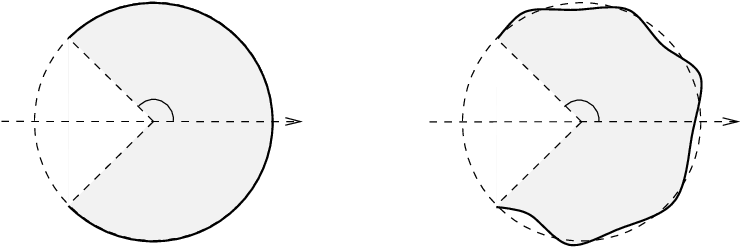\caption{{\small The circular arc $A(\theta)$, the circular sector $S(\theta)$, and their perturbations defined by $u\in W^{1,2}_0(A(\theta))$.}}\label{fig: calotta}
\end{figure}
(Notice that $A(\theta,0)=A(\theta)$ and $S(\theta,0)=S(\theta)$.) In the analysis of the case $m_1=m_2$, where the interface between the chambers is a segment, it is convenient to introduce as a reference domain the vertical open segment $H$ and its perturbations $H(u)$ defined as
\begin{eqnarray}
H=\big\{x\in\R^2:\, |x_2|<\frac{\sqrt{3}}2,\, x_1=0\big\}\,,\qquad H(u)=\big\{x+u(x)\,e_1: x\in \,H\big\}\,,
\end{eqnarray}
in correspondence of $u\in W^{1,2}_0(H)$. We occasionally identify $A(\theta)$ with the interval $(-\theta,\theta)$ and $H$ with the interval $(-\sqrt{3}/2,\sqrt{3}/2)$; correspondingly, we identify $W_0^{1,2}(A(\theta))$ with $W_0^{1,2}(-\theta,\theta)$ and $W^{1,2}_0(H)$ with $W^{1,2}_0(-\sqrt{3}/2,\sqrt{3}/2)$.

\begin{lemma}\label{lemma: sviluppi calotte}
If $u\in C^1_0(-\theta,\theta)$, then
  \begin{eqnarray}\label{g theta u}
    |S(\theta,u)|-|S(\theta)|&=&\int_{-\theta}^{\theta} u+\frac{u^2}2 \,,
    \\\label{kappa theta u}
    \H^{1}(A(\theta,u))-\H^{1}(A(\theta))&=&\int_{-\theta}^{\theta} u+\frac{(u')^2}2
   \, +\|u\|_{C^1(-\theta,\theta)}\,\O(\|u\|_{W^{1,2}(-\theta,\theta)}^2)\,.\hspace{0.5cm}
  \end{eqnarray}
Moreover, if $|u|\le 1$, then
\begin{equation}
  \label{giovani inesperti}
  |S(\theta,u)\Delta S(\theta)|\le \frac32\,\int_{-\theta}^{\theta}|u|\,.
\end{equation}
 \end{lemma}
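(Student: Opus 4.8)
The plan is to pass to polar coordinates, where the statement becomes a one-variable computation. Identifying $A(\theta)$ with $(-\theta,\theta)$ and writing points of $\R^2$ as $\rho(\cos\phi,\sin\phi)$, the sector $S(\theta)$ corresponds to $\{0<\rho<1,\ \phi\in(-\theta,\theta)\}$, the perturbed arc $A(\theta,u)$ is the polar graph $\rho=1+u(\phi)$, and $S(\theta,u)$ is the corresponding radial subgraph $\{0<\rho<1+u(\phi),\ \phi\in(-\theta,\theta)\}$, i.e.\ the cone over $A(\theta,u)$ from the origin. This is meaningful as soon as $1+u>0$ on $(-\theta,\theta)$, which holds once $\|u\|_\infty$ is small, hence once $\|u\|_{W^{1,2}}$ is small, by $W^{1,2}_0(-\theta,\theta)\hookrightarrow C^0$.

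First I would dispatch \eqref{g theta u} and \eqref{giovani inesperti}, which are immediate from Fubini in polar coordinates (area element $\rho\,d\rho\,d\phi$). On the one hand,
\[
|S(\theta,u)|=\int_{-\theta}^{\theta}\!\!\int_0^{1+u(\phi)}\!\rho\,d\rho\,d\phi=\int_{-\theta}^{\theta}\frac{(1+u)^2}{2}=\theta+\int_{-\theta}^{\theta}\Big(u+\frac{u^2}{2}\Big)\,,
\]
and since $|S(\theta)|=\theta$ this is \eqref{g theta u} --- an exact identity, with no remainder. On the other hand, for each $\phi$ the symmetric difference of the radial slices $(0,1)$ and $(0,1+u(\phi))$ is the interval between $1$ and $1+u(\phi)$, so
\[
|S(\theta,u)\Delta S(\theta)|=\int_{-\theta}^{\theta}\Big|\int_1^{1+u(\phi)}\!\rho\,d\rho\Big|\,d\phi=\int_{-\theta}^{\theta}\Big|u+\frac{u^2}{2}\Big|\,d\phi\le\frac32\int_{-\theta}^{\theta}|u|\,,
\]
the last step using $\big|u+\tfrac{u^2}{2}\big|=|u|\,\big|1+\tfrac u2\big|\le\tfrac32|u|$ when $|u|\le1$. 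This gives \eqref{giovani inesperti}.

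For \eqref{kappa theta u} I would use the polar arc-length formula, $\H^1(A(\theta,u))=\int_{-\theta}^{\theta}\sqrt{(1+u)^2+(u')^2}\,d\phi$, so that
\[
\H^1(A(\theta,u))-\H^1(A(\theta))=\int_{-\theta}^{\theta}\Big(\sqrt{(1+u)^2+(u')^2}-1\Big)\,d\phi\,,
\]
and then Taylor-expand the integrand. The key observation is that $\psi(s,t):=\sqrt{(1+s)^2+t^2}$ is smooth near $(0,0)$ with $\psi(0,0)=1$, $\nabla\psi(0,0)=(1,0)$ and $\nabla^2\psi(0,0)=\mathrm{diag}(0,1)$ --- the $s^2$-coefficient vanishing because $\psi_{ss}=t^2/\psi^3$. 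Hence $\psi(s,t)=1+s+\tfrac{t^2}{2}+R(s,t)$, where $R(s,t):=\psi(s,t)-1-s-\tfrac{t^2}{2}$ is smooth near the origin and vanishes there together with its derivatives up to order two, so $R(s,t)=\o(s^2+t^2)$. Substituting $(s,t)=(u,u')$ and integrating,
\[
\H^1(A(\theta,u))-\H^1(A(\theta))=\int_{-\theta}^{\theta}\Big(u+\frac{(u')^2}{2}\Big)+\int_{-\theta}^{\theta}R(u,u')\,,
\]
and the last integral is precisely the quantity written $\o\big(\int u^2\big)+\o\big(\int(u')^2\big)$ in \eqref{kappa theta u}: it is the integral of the pointwise Taylor remainder of the arc-length integrand, and in the regime where $\|u\|_{C^1}$ is small (the only one in which \eqref{kappa theta u} is invoked) the pointwise bound $|R(u,u')|\le\e(\|u\|_{C^1})\,(u^2+(u')^2)$, with $\e\to0$, makes it genuinely $\o\big(\|u\|_{W^{1,2}}^2\big)$.

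I do not expect any real obstacle here: the lemma is genuinely elementary once one is in polar coordinates. The single computation worth carrying out carefully is the Hessian of $\psi$ at the origin; it is exactly the vanishing of its $s^2$-entry that explains why \eqref{g theta u} carries a $u^2/2$ term while \eqref{kappa theta u} carries a $(u')^2/2$ term but no $u^2$ term.
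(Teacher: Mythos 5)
Your proof is correct and follows essentially the same approach as the paper: polar coordinates for the area and symmetric-difference identities, and the arc-length (Jacobian) formula with a second-order Taylor expansion for \eqref{kappa theta u}. The only cosmetic difference is that you Taylor-expand $\sqrt{(1+s)^2+t^2}$ directly in two variables, whereas the paper substitutes into the one-variable expansion of $\sqrt{1+t}$; the computation is the same.
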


\begin{proof}
 Identity \eqref{g theta u} follows from $|S(\theta,u)|=(1/2)\int_{-\theta}^{\theta}(1+u)^2$, which also implies \eqref{giovani inesperti} since, if $|u|\le 1$, then
  \[
  |S(\theta,u)\Delta S(\theta)|=\int_{-\theta}^\theta\Big|\frac{(1+u)^2-1}2\Big|\le \frac32\,\int_{-\theta}^{\theta}|u|\,.
  \]
  Concerning \eqref{kappa theta u}, we notice that $A(\theta,u)=T(A(\theta))$ where we have set $T:A(\theta)\to A(\theta, u)$, $T(x)=(1+u(x))x$, $x\in A(\theta)$. The Jacobian of $T$ on $A(\theta)$ is $J T=\sqrt{(1+u)^{2}+|u'|^2}$, and thus \eqref{kappa theta u} follows from $\sqrt{1+t}=1+(t/2)-(t^2/8)+\O(t^3)$.
\end{proof}

Next, given $m_2\ge m_1>0$, we fix a reference standard double-bubble $\E_0$ with $\vol(\E_0)=(m_1,m_2)$ by requiring that the two point singularities of $\E_0$ belong to the $x_2$-axis, and that their middle-point lies at the origin (indeed, these geometric requirements uniquely identify $\E_0$). In the case that $m_2>m_1$, there exist $L_k:\R^2\to\R^2$ isometries, $r_k>0$, and $\theta_k\in(0,\pi)$ such that
\begin{eqnarray}\label{e0 1}
  \pa \E_0(1)\cap\pa \E_0(2)&=& L_0\,r_0\,A(\theta_0)\,,
  \\\label{e0 2}
  \pa \E_0(1)\setminus\pa \E_0(2)&=& L_1\,r_1\,A(\theta_1)\,,
  \\\label{e0 3}
  \pa \E_0(2)\setminus\pa \E_0(1)&=& L_2\,r_2\,A(\theta_2)\,.
\end{eqnarray}
With reference Figure \ref{fig: doppiabolla}, we thus have
\begin{eqnarray*}
  r_0=|S-P_0|\,,&&\quad \theta_0=(P_1P_0S)\,,
  \\
  r_1=|S-P_1|\,,&&\quad \theta_1=(P_0P_1S)\,,
  \\
  r_2=|S-P_2|\,,&&\quad \theta_2=\pi-(P_1P_2S)\,,
\end{eqnarray*}
and it holds
\begin{equation}
  \label{theta012+}
  r_0\sin\theta_0=r_1\sin\theta_1\,,\quad\quad r_0\sin\theta_0=r_2\sin\theta_2\,.
\end{equation}
By Plateau's laws (vanishing of first variation), the three circular arcs meet at 120 degrees angles,
\begin{equation}
  \label{theta012}
  \theta_1+\theta_0=\frac{2\pi}3\,,\quad\quad\theta_2-\theta_0=\frac{2\pi}3\,,
\end{equation}
and, correspondingly, the following inequalities hold true
\begin{equation}
  \label{theta intervalli}
  0<\theta_0<\frac{\pi}3\,,\quad\quad \frac{\pi}3<\theta_1<\frac{2\pi}3\,,\quad\quad \frac{2\pi}3<\theta_2<\pi\,.
\end{equation}
Vanishing of first variation also implies the following ``law of pressures'',
\begin{eqnarray}
  \label{vincolo pressioni}
  \frac1{r_1}=\frac1{r_2}+\frac1{r_0}\,.
\end{eqnarray}
Identities \eqref{theta012+} and \eqref{theta012} provide
\begin{figure}
  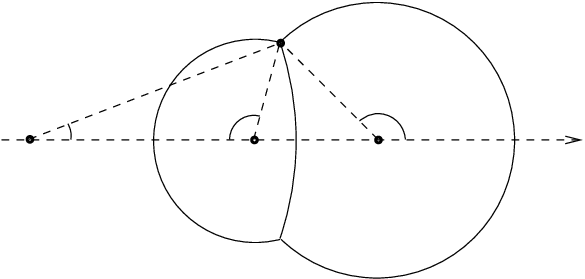\caption{{\small The reference standard double-bubble $\E_0$}.}\label{fig: doppiabolla}
\end{figure}
four constraints on the six parameters $r_k$ and $\theta_k$, $k=0,1,2$. Up to a scaling, which leaves the ratio $m_2/m_1$ invariant, we may add to \eqref{theta012+} and \eqref{theta012} a fifth constraint by requiring that
\[
r_2=1\,.
\]
This choice allows to express the remaining five parameters as functions of $r_1\in(0,1)$:
\begin{eqnarray}\label{r2}
 r_0&=&\frac{r_1}{1-r_1}\,,
 \\\label{r yeah}
 \theta_0&=&\arctan\Big(\frac{1-r_1}{1+r_1}\sqrt{3}\Big)\,,
 \\\label{r yeah 1}
 \theta_1&=&\frac{2\pi}3-\theta_0\,,
 \\\label{r yeah 2}
 \theta_2&=&\frac{2\pi}3+\theta_0.
\end{eqnarray}
Finally, in the case $m_1=m_2$, we set $m=m_1=m_2$, $r=r_1=r_2$, we have
\[
\theta_1=\theta_2=\frac{2\pi}{3}\,, \qquad \theta_0=0 \,,\qquad r_0=+\infty\,,
\]
and describe the interfaces of the reference standard double-bubble $\E_0$ as
\begin{eqnarray}\label{e0 4}
  \pa \E_0(1)\cap\pa \E_0(2)&=&L_0\,r\,H,
  \\\label{e0 5}
  \pa \E_0(1)\setminus\pa \E_0(2)&=& L_1\,r\,A\left(\frac{2\pi}{3}\right),
  \\\label{e0 6}
  \pa \E_0(2)\setminus\pa \E_0(1)&=& L_2\,r\,A\left(\frac{2\pi}{3}\right),
\end{eqnarray}
for some isometries $L_k:\R^2\to\R^2$, $k=0,1,2$; see Figure \ref{fig sym}.
\begin{figure}
  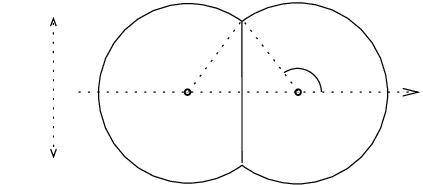\caption{\small{The reference standard double-bubble $\E_0$ with $m_1=m_2$.}}\label{fig sym}
\end{figure}
Notice that \eqref{e0 5} and \eqref{e0 6} are obtained from \eqref{e0 2} and \eqref{e0 3} by setting $\theta_1=\theta_2=(2/3)\pi$, while \eqref{e0 4} is not directly related to \eqref{e0 1}. Finally, we show the following useful formula for $P(\E_0)$ in terms of $m_1$, $m_2$, $r_1$, and $r_2$.

\begin{lemma} If $\E_0$ is the standard double-bubble with $m_2> m_1$, then
\begin{eqnarray}
  \label{gippo rule}
  P(\E_0)&=&2\Big(\frac{m_1}{r_1}+\frac{m_2}{r_2}\Big)\,,
  \\\label{formulam1}
  m_1&=&\theta_1\,r_1^2+\theta_0\,r_0^2-\frac{\sqrt{3}}2\,r_0\,r_1\,,
  \\\label{formulam2}
  m_2&=&\theta_2\,r_2^2-\theta_0\,r_0^2+\frac{\sqrt{3}}2\,r_0\,r_2\,.
\end{eqnarray}
Moreover, \eqref{gippo rule} holds true also when $m_2=m_1=m$, and in that case, we have
\begin{equation}
  \label{formulam}
  m=\Big(\frac{2\pi}3+\frac{\sqrt{3}}{4}\Big)\,r^2\,.
\end{equation}
\end{lemma}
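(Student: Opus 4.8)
The plan is to obtain the area formulas \eqref{formulam1}--\eqref{formulam2} by cutting each chamber of $\E_0$ along the chord joining its two triple points, which turns the computation into areas of circular segments plus two small triangles, and then to deduce the perimeter formula \eqref{gippo rule} from \eqref{formulam1}--\eqref{formulam2} by a one-line algebraic manipulation whose only nontrivial input is the law of pressures \eqref{vincolo pressioni}.

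First I would set coordinates adapted to the symmetry of $\E_0$. Reflection across the $x_1$-axis is a symmetry of $\E_0$, so the centers $P_0,P_1,P_2$ of the three circular arcs lie on the $x_1$-axis, while the two triple points are $S=(0,h)$ and $S'=(0,-h)$, where by \eqref{theta012+} one has $h=r_0\sin\theta_0=r_1\sin\theta_1=r_2\sin\theta_2$. Writing $\Gamma_0,\Gamma_1,\Gamma_2$ for the three interfaces appearing in \eqref{e0 1}--\eqref{e0 3} and unwinding the definitions of $r_k$ and $\theta_k$ as the radii and half-apertures measured at the $P_k$'s (with the labelling of Figure \ref{fig: doppiabolla}), one finds $P_0=(-r_0\cos\theta_0,0)$, $P_1=(r_1\cos\theta_1,0)$, $P_2=(-r_2\cos\theta_2,0)$, so $P_0,P_1,P_2$ occur in this order on the axis, with $|P_0-P_1|=r_1\cos\theta_1+r_0\cos\theta_0$ and $|P_0-P_2|=r_0\cos\theta_0-r_2\cos\theta_2$; and since $1/r_0=1/r_1-1/r_2>0$ by \eqref{vincolo pressioni}, the interface $\Gamma_0$ bulges into the larger chamber $\E_0(2)$. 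I would also record the elementary fact that the planar region enclosed between an arc $r\,A(\alpha)$ and the chord of its endpoints has area $\alpha r^2-r^2\sin\alpha\cos\alpha$, for every $\alpha\in(0,\pi)$ and with no case distinction according to whether the arc is shorter or longer than a semicircle.

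With this set-up, cutting $\E_0(1)$ along the chord $SS'$ exhibits it as the disjoint union of the circular segment enclosed by $\Gamma_1$ and $SS'$ and of the circular segment enclosed by $\Gamma_0$ and $SS'$, the latter lying on the $\E_0(1)$-side exactly because $\Gamma_0$ bulges toward $\E_0(2)$; therefore $m_1=\theta_1r_1^2+\theta_0r_0^2-\big(r_1^2\sin\theta_1\cos\theta_1+r_0^2\sin\theta_0\cos\theta_0\big)$. Using $r_1\sin\theta_1=r_0\sin\theta_0=h$ and the coordinates above, the last bracket equals $h\,(r_1\cos\theta_1+r_0\cos\theta_0)=h\,|P_0-P_1|$, i.e.\ twice the area of the triangle $P_0SP_1$ (base $P_0P_1$, height $h$); since that triangle has angle $\pi-\theta_0-\theta_1=\pi/3$ at $S$ by \eqref{theta012}, the bracket is $r_0r_1\sin(\pi/3)=\tfrac{\sqrt3}{2}r_0r_1$, which gives \eqref{formulam1}. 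Symmetrically, $\E_0(2)$ is the circular segment enclosed by $\Gamma_2$ and $SS'$ with the circular segment enclosed by $\Gamma_0$ and $SS'$ removed, so $m_2=\theta_2r_2^2-\theta_0r_0^2-\big(r_2^2\sin\theta_2\cos\theta_2-r_0^2\sin\theta_0\cos\theta_0\big)$; the identical manipulation, now with $|P_0-P_2|$ and with the angle of the triangle $P_0SP_2$ at $S$ equal to $\theta_2-\theta_0=2\pi/3$ (again \eqref{theta012}), turns the bracket into $-\tfrac{\sqrt3}{2}r_0r_2$, and \eqref{formulam2} follows. To get \eqref{gippo rule} one notes that $\pa\E_0(0)=\Gamma_1\cup\Gamma_2$, $\pa\E_0(1)=\Gamma_0\cup\Gamma_1$, $\pa\E_0(2)=\Gamma_0\cup\Gamma_2$ and $\H^1(\Gamma_k)=2\theta_kr_k$, so \eqref{perimetro distribuzionale} gives $P(\E_0)=2(\theta_0r_0+\theta_1r_1+\theta_2r_2)$; dividing \eqref{formulam1} by $r_1$, \eqref{formulam2} by $r_2$ and adding, the $\tfrac{\sqrt3}{2}r_0$-terms cancel and the surviving cross-term is $\theta_0r_0^2(1/r_1-1/r_2)=\theta_0r_0$ by \eqref{vincolo pressioni}, hence $m_1/r_1+m_2/r_2=\theta_1r_1+\theta_2r_2+\theta_0r_0=\tfrac12P(\E_0)$, which is \eqref{gippo rule}.

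For the degenerate case $m_1=m_2=m$ the interface $\Gamma_0$ becomes the segment $rH$ of length $r\sqrt3$ and the two outer arcs become copies of $rA(2\pi/3)$, each of length $\tfrac{4\pi}{3}r$, so $P(\E_0)=r\sqrt3+\tfrac{8\pi}{3}r$; by symmetry $m=|\E_0(1)|$, and cutting $\E_0(1)$ along $SS'$ now leaves only the single circular segment enclosed by $rA(2\pi/3)$ and $SS'$, whose area by the elementary identity above is $\tfrac{2\pi}{3}r^2-r^2\sin(2\pi/3)\cos(2\pi/3)=\tfrac{2\pi}{3}r^2+\tfrac{\sqrt3}{4}r^2$, which is \eqref{formulam}; finally $2(m/r+m/r)=4m/r=\tfrac{8\pi}{3}r+\sqrt3r=P(\E_0)$, so \eqref{gippo rule} holds here too. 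The whole argument is elementary plane geometry; I expect the only places demanding care to be (i) the sign bookkeeping for the two segments cut off by $SS'$ — both added for $\E_0(1)$, one subtracted for $\E_0(2)$ — which is governed by the direction in which $\Gamma_0$ bulges, itself forced by the law of pressures; and (ii) recognising that the constant $\sqrt3/2=\sin(\pi/3)=\sin(2\pi/3)$ in \eqref{formulam1}--\eqref{formulam2} is precisely what the $120^\circ$ meeting condition \eqref{theta012} feeds in through the apex angles at the triple point. As a variant, one may instead apply the divergence theorem to the field $x\mapsto x-P_k$ on each chamber, using $\int_{\pa^*\E_0(h)}\nu=0$ together with the constancy of $(x-P_k)\cdot\nu$ along the arc centered at $P_k$, the only additional ingredient being the first moment $\int_{\Gamma_0}(x-P_0)\,d\H^1=2r_0^2\sin\theta_0\,e_1$.
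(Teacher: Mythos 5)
Your proof is correct, and it reaches all four identities by valid elementary arguments; but your main route differs from the paper's, which in fact uses precisely the ``variant'' you list at the end. The paper applies the divergence theorem to $x\mapsto x-P_1$ on $\E_0(1)$ and to $x\mapsto x-P_2$ on $\E_0(2)$, obtaining $2m_h = 2\theta_h r_h^2 + \int_{\Gamma_0}(x-P_h)\cdot(\pm\nu)\,d\H^1$ and then evaluating the flux through $\Gamma_0$ by putting the origin at $P_0$ and parameterizing the arc as $\{r_0 e^{i\theta}:|\theta|<\theta_0\}$; the constants $-\sqrt3 r_0 r_1$ and $+\sqrt3 r_0 r_2$ come out automatically from the inner products, with no need to decide in advance which way $\Gamma_0$ bulges. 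Your decomposition by the chord $SS'$ is an equally valid and perhaps more visual alternative, but as you yourself stress, it forces you to determine the bulge direction of $\Gamma_0$ from the sign of $1/r_0 = 1/r_1 - 1/r_2$, and to keep the sign bookkeeping of the two circular segments straight (both added for the small chamber, one subtracted for the large one). That case analysis is exactly what the divergence-theorem formulation conveniently absorbs into the orientation of the outer normal, which is presumably why the authors chose it. The passage from \eqref{formulam1}--\eqref{formulam2} to \eqref{gippo rule} via \eqref{vincolo pressioni}, and the separate treatment of the symmetric case $m_1=m_2=m$, are handled the same way in both arguments.
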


\begin{proof}
  We apply the divergence theorem on the chamber $\E_0(1)$ to the vector field $x-P_1$, and on the chamber $\E_0(2)$ to the vector field $x-P_2$, to find that
  \begin{eqnarray}\label{m1}
  2\,m_1&=&2\theta_1\,r_1^2+\int_{\pa\E_0(1)\cap\pa\E_0(2)}(x-P_1)\cdot\nu_{\E_0(1)}(x)\,d\H^1(x)\,,
  \\\label{m2}
  2\,m_2&=&2\theta_2\,r_2^2+\int_{\pa\E_0(1)\cap\pa\E_0(2)}(x-P_2)\cdot(-\nu_{\E_0(1)}(x))\,d\H^1(x)\,.
  \end{eqnarray}
  (Here, $\nu_{\E_0(1)}$ denotes the outer unit normal to $\E_0(1)$.) In the case $m_2>m_1$, we set the origin at $P_0$ (see Figure \ref{fig: doppiabolla}), and parameterize $\pa\E_0(1)\cap\pa\E_0(2)$ as $\{r_0\,e^{i\theta}:|\theta|<\theta_0\}$. In this way, see Figure \ref{fig angoli},
  \begin{figure}
    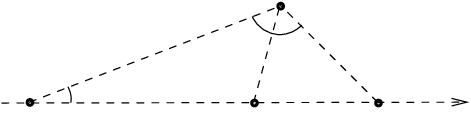\caption{\small{We have $t_1/\sin(\pi/3)=r_1/\sin\theta_0$ and $t_2/\sin(2\pi/3)=r_2/\sin\theta_0$.}}\label{fig angoli}
  \end{figure}
  we have $P_1=(t_1,0)$ and $P_2=(t_2,0)$, where
  \[
  \frac{t_1}{\sin(\pi/3)}=\frac{r_1}{\sin\theta_0}\,,\qquad\frac{t_2}{\sin(2\pi/3)}=\frac{r_2}{\sin\theta_0}\,,
  \]
  and, correspondingly
  \begin{eqnarray*}
    \int_{\pa\E_0(1)\cap\pa\E_0(2)}(x-P_1)\cdot\nu_{\E_0(1)}(x)\,d\H^1(x)&=&\int_{-\theta_0}^{\theta_0} (r_0\,e^{i\theta}-(t_1,0))\cdot e^{i\,\theta}\,r_0\,d\theta
    \\
    &=&2\theta_0\,r_0^2-2\sin\theta_0\,r_0\,t_1=2\theta_0\,r_0^2-\sqrt{3}\,r_0\,r_1\,,
    \\
    \int_{\pa\E_0(1)\cap\pa\E_0(2)}(P_2-x)\cdot\nu_{\E_0(1)}(x)\,d\H^1(x)&=&\int_{-\theta_0}^{\theta_0} ((t_2,0)-r_0\,e^{i\theta})\cdot e^{i\,\theta}\,r_0\,d\theta
    \\
    &=&-2\theta_0\,r_0^2+2\sin\theta_0\,r_0\,t_2=-2\theta_0\,r_0^2+\sqrt{3}\,r_0\,r_2\,.
  \end{eqnarray*}
  We plug these identities into \eqref{m1} and \eqref{m2} to find \eqref{formulam1} and \eqref{formulam2}; moreover, dividing \eqref{formulam1} and \eqref{formulam2} by $r_1$ and $r_2$ respectively, by adding up the resulting inequalities, and by \eqref{vincolo pressioni},
  \begin{eqnarray*}
    2\Big(\frac{m_1}{r_1}+\frac{m_2}{r_2}\Big)=2\theta_1\,r_1+2\theta_2\,r_2+2\theta_0\Big(\frac{r_0^2}{r_1}-\frac{r_0^2}{r_2}\Big)
    =2\theta_1\,r_1+2\theta_2\,r_2+2\theta_0\,r_0=P(\E_0)\,,
  \end{eqnarray*}
  that is \eqref{gippo rule}. In the case $m_2=m_1$, $\nu_{\E_0(1)}(x)=e_1$ and $(x-P_1)\cdot e_1=(P_2-x)\cdot e_1=\ell$ for every $x\in\pa\E_0(1)\cap\pa\E_0(2)$, where, by Pythagoras' theorem, $\ell=r/2$. Therefore, \eqref{m1} gives
  \[
  2\,m=2\,\frac{2\pi}3\,r^2+\ell\,\H^1(\pa\E_0(1)\cap\pa\E_0(2))=\frac{4\pi}3\,r^2+\frac{\sqrt{3}}{2}\,r^2=\frac{P(\E_0)}2\,r\,,
  \]
  and \eqref{gippo rule} holds true when $m_2=m_1$ too.
\end{proof}

We now describe the generic $(\e,\s)$-perturbation of $\E_0$ by means of the coordinates introduced above. Let $\E$ be a planar double-bubble with $\vol(\E)=\vol(\E_0)=(m_1,m_2)$. If $m_2>m_1$, then $\E$ is an $(\e, \s)$-perturbation of $\E_0$ if there exist functions $u_k\in C^1_0(A(\theta_k))$ with $\|u_k\|_{C^1}\le\e$ ($k=0,1,2$),
such that (compare with \eqref{e0 1}, \eqref{e0 2}, and \eqref{e0 3}),
\begin{eqnarray}\label{bbb1}
  \pa \E(1)\setminus\pa \E(2)&=&(1+\s)\,L_{1}\,r_1\,A(\theta_1,u_1)\,,
  \\\label{bbb2}
  \pa \E(2)\setminus\pa \E(1)&=&(1+\s)\,L_{2}\,r_2\,A(\theta_2,u_2)\,,
  \\\label{bbb3}
    \pa \E(1)\cap\pa \E(2)&=&(1+\s)\,L_{0}\,r_0\,A(\theta_0,u_0)\,.
\end{eqnarray}
If $m_2=m_1$, then $\E$ is an $(\e, \s)$-perturbation of $\E_0$ provided there exist functions $v_0\in C^1_0(H)$, and $u_k\in C^1_0(A(\theta_k))$, $\|v_0\|_{C^1}\le \e$ and $\|u_k\|_{C^1}\le\e$ ($k=1,2$), such that \eqref{bbb1} and \eqref{bbb2} hold true for $u_1$ and $u_2$, and, moreover (compare with \eqref{e0 4}), $\pa \E(1)\cap\pa \E(2)=(1+\s)\,L_{0}\,r\,H(v_0)$.

\begin{lemma}\label{lemma: deficit}
  If $\E$ is an $(\e,\s)$-perturbation of $\E_0$ and $m_2>m_1$, then
  \begin{eqnarray}
  \frac{P(\E)-P(\E_0)}{1+\s}= \sum_{k=0}^2\,r_k\int_{-\theta_k}^{\theta_k}\Big(\frac{(u'_k)^2}2-\frac{u_k^2}2\Big)+\frac{\s^2}{2}\,P(\E_0)
  \label{stima deficit 1}
  +\e\,\O(\|u\|_{W^{1,2}}^2)+\O(|\s|^3)\,.
  \end{eqnarray}
  If otherwise $m_2=m_1$ (and we set $r_1=r_2=r$), then we have
   \begin{eqnarray}\nonumber
  \frac{P(\E)-P(\E_0)}{1+\s}
  &=&r\,\int_{-\,\sqrt{3}/2}^{\,\sqrt{3}/2}\frac{(v'_0)^2}{2}+
  r\,\sum_{k=1}^2\,\int_{-2\pi/3}^{2\pi/3}\Big(\frac{(u'_k)^2}2-\frac{u_k^2}2\Big)+\frac{\s^2}{2}\,P(\E_0)
  \\ \label{stima deficit 1bis}
  &&+\e\,\O(\|u\|_{W^{1,2}}^2)+\O(|\s|^3)\,.
  \end{eqnarray}
  Here we have set
  \[
  \|u\|_{W^{1,2}}^2=
  \left\{
  \begin{split}
  &\sum_{k=0}^2\int_{\theta_k}^{\theta_k} u_k^2+(u'_k)^2\,,\qquad&\mbox{if $m_2>m_1$}\,,
  \\
  &\int_{-\,\sqrt{3}/2}^{\,\sqrt{3}/2} v_0^2+(v'_0)^2+\sum_{k=1}^2\int_{-2\pi/3}^{2\pi/3} u_k^2+(u'_k)^2\,,&\qquad\mbox{if $m_2=m_1$}\,,
  \end{split}
  \right .
    \]
\end{lemma}
\begin{proof}
We just give the details for the case $m_2>m_1$. By \eqref{kappa theta u}, \eqref{bbb1}, \eqref{bbb2} and \eqref{bbb3},
  \begin{eqnarray*}
    P(\E)-P((1+\s)\E_0)&=&(1+\s)\sum_{k=0}^2\,r_k\Big(\H^1(A(\theta_k,u_k))-\H^1(A(\theta_k))\Big)\,,
    \\
    &=&(1+\s)\sum_{k=0}^2\,r_k\int_{-\theta_k}^{\theta_k}\left(\frac{(u_k')^2}{2}+u_k\right)+\e\,\O(\|u\|_{W^{1,2}}^2)\,.
  \end{eqnarray*}
Therefore we may write
  \begin{eqnarray}\nonumber
    \frac{P(\E)-P(\E_0)}{1+\s}
    &=&\nonumber\sum_{k=0}^2\,r_k\int_{-\theta_k}^{\theta_k}\left(\frac{(u_k')^2}{2}+u_k\right)+(\s-\s^2)\,P(\E_0)+\e\,\O(\|u\|_{W^{1,2}}^2)+\O(|\s|^3)
    \\
    &=& \label{levico 111} \sum_{k=0}^2\,r_k\int_{-\theta_k}^{\theta_k}\left(\frac{(u_k')^2}{2}-\frac{u^2_k}{2}\right)
    +\sum_{k=0}^2\,r_k\int_{-\theta_k}^{\theta_k}\left(\frac{u_k^2}{2}+u_k\right)
    \\
    &&\nonumber+(\s-\s^2) P(\E_0)+\e\,\O(\|u\|_{W^{1,2}}^2)+\O(|\s|^3)\,.
  \end{eqnarray}
Again by \eqref{bbb1}, \eqref{bbb2} and \eqref{bbb3} we find that
   \begin{eqnarray}\label{vincolo volume E1 1}
    |\E(1)|-(1+\s)^2|\E_0(1)|&=&(1+\s)^2r_1^2\Big(|S(\theta_1,u_1)|-|S(\theta_1)|\Big)
    \\\nonumber
    &&+(1+\s)^2r_0^2\Big(|S(\theta_0,u_0)|-|S(\theta_0)|\Big)\,,
  \end{eqnarray}
  \begin{eqnarray}\label{vincolo volume E2 1}
    |\E(2)|-(1+\s)^2|\E_0(2)|&=&(1+\s)^2r_2^2\Big(|S(\theta_2,u_2)|-|S(\theta_2)|\Big)
    \\\nonumber
    &&-(1+\s)^2r_0^2\Big(|S(\theta_0,u_0)|-|S(\theta_0)|\Big)\,.
  \end{eqnarray}
 Since $\vol(\E)=\vol(\E_0)=(m_1,m_2)$, by \eqref{g theta u}, \eqref{vincolo volume E1 1} and \eqref{vincolo volume E2 1} we infer
  \begin{eqnarray}\label{gippo1}
  \Big(\frac1{(1+\s)^2}-1\Big)\,m_1&=&r_1^2\int_{-\theta_1}^{\theta_1}\left(u_1+\frac{u_1^2}2\right)+r_0^2\int_{-\theta_0}^{\theta_0}\left(u_0+\frac{u_0^2}2\right)\,,
  \\\label{gippo2}
  \Big(\frac1{(1+\s)^2}-1\Big)\,m_2&=&r_2^2\int_{-\theta_2}^{\theta_2}\left(u_2+\frac{u_2^2}2\right)-r_0^2\int_{-\theta_0}^{\theta_0}\left(u_0+\frac{u_0^2}2\right)\,.
  \end{eqnarray}
We now divide \eqref{gippo1} and \eqref{gippo2} by $r_1$ and $r_2$ respectively and sum the resulting identities to find that
  \begin{eqnarray*}
    \Big(\frac1{(1+\s)^2}-1\Big)\,\left(\frac{m_1}{r_1}+\frac{m_2}{r_2}\right)&=&r_1\int_{-\theta_1}^{\theta_1}\left(u_1+\frac{u_1^2}2\right)+r_2\int_{-\theta_2}^{\theta_2}\left(u_2+\frac{u_2^2}2\right)\\
    &+&\left(\frac1{r_1}-\frac1{r_2}\right)r_0^2\int_{-\theta_0}^{\theta_0}\left(u_0+\frac{u_0^2}2\right)\,.
  \end{eqnarray*}
Taking into account \eqref{vincolo pressioni} and \eqref{gippo rule} we conclude that
  \begin{eqnarray*}
    \Big(\frac1{(1+\s)^2}-1\Big)\,\frac{P(\E_0)}2=\sum_{k=0}^2r_k \int_{-\theta_k}^{\theta_k}\left(u_k+\frac{u_k^2}2\right)\,.
  \end{eqnarray*}
 Plugging this relation into \eqref{levico 111} we find
  \begin{eqnarray}\label{levico 2}
    \frac{P(\E)-P(\E_0)}{(1+\s)}
    &=&\sum_{k=0}^2\,r_k\int_{-\theta_k}^{\theta_k}\left(\frac{(u'_k)^2}{2}-\frac{u_k^2}2\right)
    \\\nonumber
    &&+\Big(\Big(\frac1{(1+\s)^2}-1\Big)\,+2(\s-\s^2)\Big)\frac{P(\E_0)}2+\e\,\O(\|u\|_{W^{1,2}}^2)+\O(|\s|^3)\,.
  \end{eqnarray}
We conclude the proof since $((1+\s)^{-2}-1)+2(\s-\s^2)=\s^2+\O(|\s|^3)$.
\end{proof}

We now provide an upper bound on the relative asymmetry of an $(\e,\s)$-perturbation of $\E_0$.

\begin{lemma}\label{lemma upper} There exists a constant $C$ (depending on $m_1/m_2$ only) with the following property. If $\E$ is an $(\e,\s)$-perturbation of $\E_0$ with $|\s|<1/2$, then, in case $m_2>m_1$,
\begin{equation}
  \label{alphaaa1}
  \a(\E)^2\le C\Big(m_2^2\s^2+\sum_{k=0}^2\,r_k^4\theta_k\,\int_{-\theta_k}^{\theta_k}\,u_k^2\Big)\,,
\end{equation}
while, in case $m_2=m_1=m$, setting $r_1=r_2=r$,
\[
\a(\E)^2\le C\,\Big(m^2\,\s^2+r^4\sum_{k=1}^2\,\int_{-2\pi/3}^{2\pi/3}\,u_k^2+r^4\int_{-\sqrt{3}/2}^{\sqrt{3}/2}\,v_0^2\Big)\,.
\]
\end{lemma}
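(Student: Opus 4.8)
The plan is to bound $\a(\E)$ from above by taking the identity as competitor isometry. Since $\vol(\E)=\vol(\E_0)$ we have $\a(\E)\le\d(\E,\E_0)$, and since $m_2\ge m_1$,
\[
\a(\E)\le\frac{|\E(1)\Delta\E_0(1)|+|\E(2)\Delta\E_0(2)|}{m_1}\,;
\]
so everything reduces to estimating the two symmetric differences $|\E(h)\Delta\E_0(h)|$, $h=1,2$, in terms of $\s$ and of $u_0,u_1,u_2$ (resp.\ $v_0,u_1,u_2$ when $m_1=m_2$).

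First I would factor the $(\e,\s)$-perturbation into its two elementary ingredients. Let $\E'$ be the double bubble obtained from $\E_0$ by replacing each reference arc $L_k r_k A(\theta_k)$ with $L_k r_k A(\theta_k,u_k)$ (and, when $m_1=m_2$, $L_0\,r\,H$ with $L_0\,r\,H(v_0)$), but without performing the final dilation. Because the perturbations vanish at the endpoints, this keeps the two singular points of $\E_0$ fixed, so $\E'$ is again a genuine (topological) double bubble and $\E=(1+\s)\E'$. Using $A\Delta C\subseteq(A\Delta B)\cup(B\Delta C)$ with $B=(1+\s)\E_0(h)$ and the fact that dilation by $1+\s$ about the origin multiplies planar areas by $(1+\s)^2$,
\[
|\E(h)\Delta\E_0(h)|\le(1+\s)^2\,|\E'(h)\Delta\E_0(h)|+|(1+\s)\E_0(h)\Delta\E_0(h)|\,.
\]
For the first summand, since the singular points are not moved in passing from $\E_0$ to $\E'$, the set $\E'(h)\Delta\E_0(h)$ is the disjoint union of the thin regions enclosed between each arc of $\pa\E_0(h)$ and its perturbation; the region attached to $L_k r_k A(\theta_k)$ equals $L_k\big(r_k\,(S(\theta_k,u_k)\Delta S(\theta_k))\big)$, so by \eqref{giovani inesperti} (which applies since $|u_k|\le\e\le1$)
\[
|\E'(h)\Delta\E_0(h)|\le\frac32\sum_{k}r_k^2\int_{-\theta_k}^{\theta_k}|u_k|\,,
\]
the sum running over the two arcs that bound chamber $h$; for the straight interface in the case $m_1=m_2$ one uses instead the elementary estimate $\le C\,r^2\int_{-\sqrt3/2}^{\sqrt3/2}|v_0|$ for the area swept by $r\,H(v_0)$.

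For the dilation summand, $\E_0(h)$ is a fixed bounded set with piecewise smooth boundary contained in a ball of radius $R=R(m_1,m_2)$, and $x\mapsto(1+\s)x$ is the flow at time $-\log(1+\s)$ of the linear vector field $x$; hence a standard $BV$ estimate (alternatively, a direct computation in polar coordinates centered at the origin, using that $\E_0(h)$ meets $\H^1$-a.e.\ ray in boundedly many intervals) gives $|(1+\s)\E_0(h)\Delta\E_0(h)|\le C\,|\s|\,R\,P(\E_0)$ for $|\s|<1/2$, and since $R$ and $P(\E_0)$ are both comparable to $\sqrt{m_1+m_2}$ — see \eqref{gippo rule} — this is at most $C\,|\s|\,m_1$ with $C$ depending on $m_1/m_2$ only (both sides of \eqref{alphaaa1} being scale invariant, one may as well normalize the scale). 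Inserting these bounds, using $|\s|<1/2$, then Cauchy--Schwarz $\int_{-\theta_k}^{\theta_k}|u_k|\le(2\theta_k)^{1/2}\big(\int_{-\theta_k}^{\theta_k}u_k^2\big)^{1/2}$ together with $(a_1+a_2+a_3+a_4)^2\le4(a_1^2+a_2^2+a_3^2+a_4^2)$, one obtains \eqref{alphaaa1}; the case $m_1=m_2$ is handled in exactly the same way.

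The step I expect to require the most care is pure bookkeeping rather than anything conceptual: checking that $\E'(h)\Delta\E_0(h)$ really splits as the disjoint union of the two perturbed–sector regions, i.e.\ that for $\e$ small the perturbed interface and the perturbed outer arc of chamber $h$ do not meet except near the (fixed) singular points, where both tubes pinch to zero width; and keeping track of the scaling of the radii $r_k$ and of $m_1$ through the dilation step so that the resulting constant depends on $m_1/m_2$ alone. The $m_1=m_2$ case additionally calls for the routine analogue of \eqref{giovani inesperti} for the perturbed straight interface $r\,H(v_0)$; beyond these points the argument uses only Lemma~\ref{lemma: sviluppi calotte} and elementary inequalities.
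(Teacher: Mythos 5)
Your argument is correct and follows essentially the same route as the paper's proof: both compare $\E(h)$ to $\E_0(h)$ through the intermediate set $(1+\s)\E_0(h)$, bound the perturbation term by \eqref{giovani inesperti}, bound the dilation term by a $|(1+\s)E\Delta E|\le C\,R\,|\s|\,P(E)$ estimate (the paper cites \cite[Lemma 4]{FigalliMaggiARMA} for this), and finish with Cauchy--Schwarz and scale invariance. Your factorization $\E=(1+\s)\E'$ and the set-theoretic triangle inequality are just a slightly more explicit packaging of the identity $|\E(1)\Delta(1+\s)\E_0(1)|=(1+\s)^2\sum_k r_k^2|S(\theta_k,u_k)\Delta S(\theta_k)|$ which the paper writes directly.
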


\begin{proof} We just address the case $m_2>m_1$. Since
\[
|\E(1)\Delta (1+\s)\E_0(1)|=(1+\s)^2\,\sum_{k=0}^1\,r_k^2\,|S(\theta_k,u_k)\Delta S(\theta_k)|\,,
\]
by the triangular inequality one gets
\begin{eqnarray*}
  |\E(1)\Delta\E_0(1)|\le(1+\s)^2\,\sum_{k=0}^1\,r_k^2\,|S(\theta_k,u_k)\Delta S(\theta_k)|+\Big|(1+\s)\E_0(1)\Delta \E_0(1)\Big|\,.
\end{eqnarray*}
By \cite[Lemma 4]{FigalliMaggiARMA}, if $|\s|<1/2$ and $E\subset B_R\subset\R^n$, then $|E\Delta (1+\s) E|\le C(n)\,R\,|\s|\,P(E)$. Moreover, by scaling, $\E_0(1)\subset B_{C\sqrt{m_1}}$ and $P(\E_0(1))\le C\,\sqrt{m_1}$. Hence,
\[
\Big|(1+\s)\E_0(1)\Delta \E_0(1)\Big|\le C\,m_1\,|\s|\,.
\]
Thus, by $(1+\s)^2\le 9/4$ (recall that $|\s|<1/2$), we conclude
\[
|\E(1)\Delta\E_0(1)|\le C\Big(\sum_{k=0}^1\,r_k^2\,\int_{-\theta_k}^{\theta_k}\,|u_k|+m_1|\s|\Big)
\le C\Big(\sum_{k=0}^1\,r_k^2\theta_k^{1/2}\,\Big(\int_{-\theta_k}^{\theta_k}\,u_k^2\Big)^{1/2}+m_1|\s|\Big)\,,
\]
where \eqref{giovani inesperti} was also taken into account. In conclusion,
\[
|\E(1)\Delta\E_0(1)|^2\le C\Big(\sum_{k=0}^1\, r_k^4\theta_k\,\int_{-\theta_k}^{\theta_k}\,u_k^2+m_1^2\,\s^2\Big)\,.
\]
By arguing similarly with $\E(2)$ in place of $\E(1)$, and since $m_2>m_1$, we obtain \eqref{alphaaa1}.
\end{proof}

The previous results indicate that in order to prove \eqref{global stability double-bubble} on $(\e,\s)$-perturbation (say, in the case $m_2>m_1$) we have to provide a control over
\begin{equation}
  \label{gatto}
  \sum_{k=0}^2\,\int_{-\theta_k}^{\theta_k}u_k^2
\end{equation}
in terms of
\begin{equation}
  \label{topo}
  \sum_{k=0}^2\,\int_{-\theta_k}^{\theta_k}(u_k')^2-u_k^2\,.
\end{equation}
However
\begin{equation}
  \label{fufu}
  \int_{-\theta}^{ \theta}(u')^2-u^2\,,
\end{equation}
is not $L^2$-coercive on $W^{1,2}_0(-\theta,\theta)$, unless $\theta<\pi/2$. Indeed, we easily see that
\[
\inf\Big\{\int_{-\theta}^{\theta}(u')^2:u\in W^{1,2}_0(-\theta,\theta)\,,\int_{-\theta}^{\theta}u^2=1\Big\}=\Big(\frac\pi{2\theta}\Big)^2\,,\qquad\forall \theta>0\,,
\]
so that the best control over $\|u\|_{L^2(-\theta,\theta)}^2$ in terms of $\|u'\|_{L^2(-\theta,\theta)}^2$ is
\begin{eqnarray}
  \label{sharp poincare}
  \int_{-\theta}^{ \theta}(u')^2\ge \Big(\frac\pi{2\theta}\Big)^2\int_{-\theta}^\theta\,u^2\,,\qquad\forall u\in W^{1,2}_0(-\theta,\theta)\,.
\end{eqnarray}
In other words, if $\theta>\pi/2$, then
\[
\inf\Big\{\int_{-\theta}^{ \theta}(u')^2-u^2:u\in W^{1,2}_0(-\theta,\theta)\Big\}=-\infty\,.
\]
Taking into account that $\theta_1$ and $\theta_2$ may possibly range on $(\pi/2,\pi)$, see \eqref{theta intervalli}, we conclude that in order to control \eqref{gatto} in terms of \eqref{topo} we necessarily have to exploit the interaction between the single perturbations $u_k$ through the multiple volume constraints. We now discuss this issue through a careful application of two Poincar\'e-type inequalities. We start by addressing the minimization of \eqref{fufu} under a constraint on the mean value of $u$.

\begin{lemma}\label{stima_inf_2}
If $\theta\in(0,{\pi})$ and $s\in\R$, then
  \begin{eqnarray}\label{eq:stima_inf_2}
    \inf\Big\{\int_{-\theta}^{\theta} (u')^2-u^2:u\in W^{1,2}_0(-\theta,\theta)\,,\int_{-\theta}^{\theta} u=s\Big\}=
    \frac{s^2\,\cos\theta}{2(\sin\theta-\theta\,\cos\theta)}\,.
  \end{eqnarray}
Notice that $\sin\theta-\theta\,\cos\theta$ defines an increasing function on $(0,\pi)$, with values in $(0,\pi)$. Thus the right-hand side of \eqref{eq:stima_inf_2}  decreases from $+\infty$ to $0$ as $\theta\in(0,\pi/2)$, is equal to $0$ for $\theta=\pi/2$, and decreases from $0$ to $-s^2/2\pi$ as $\theta\in(\pi/2,\pi)$.
\end{lemma}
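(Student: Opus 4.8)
The plan is to solve the constrained minimization problem explicitly via the Euler--Lagrange equation, exploiting the fact that the constraint $\int_{-\theta}^\theta u = s$ is linear, so that minimizers (if they exist) are smooth solutions of $-u'' - u = \mu$ for a Lagrange multiplier $\mu\in\R$, subject to the boundary conditions $u(\pm\theta)=0$. First I would observe that since $\theta<\pi$, we have $\theta/2 < \pi/2$, so by the sharp Poincar\'e inequality \eqref{sharp poincare} on the half-interval, or more directly by an even/odd decomposition argument, the quadratic form restricted to the affine subspace $\{\int u = s\}$ is bounded below and the infimum is attained (the only ``dangerous'' directions for the quadratic form $\int (u')^2 - u^2$ are the low-frequency modes, and one checks that on the codimension-one affine subspace the form is coercive for $\theta<\pi$). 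This justifies passing to the Euler--Lagrange analysis.

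Next I would solve $-u'' - u = \mu$ explicitly: the general solution is $u(x) = a\cos x + b\sin x - \mu$. The boundary conditions $u(-\theta)=u(\theta)=0$ force $b=0$ (by the symmetry $s\mapsto$ even minimizer; the odd part of any competitor only increases the functional subject to the same constraint value, so the minimizer is even) and $a\cos\theta = \mu$. Thus $u(x) = \mu\big(\tfrac{\cos x}{\cos\theta} - 1\big)$. Imposing the constraint $\int_{-\theta}^\theta u = s$ gives $\mu\big(\tfrac{2\sin\theta}{\cos\theta} - 2\theta\big) = s$, i.e. $\mu = \dfrac{s\cos\theta}{2(\sin\theta - \theta\cos\theta)}$. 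Then I would compute the value of the functional at this minimizer. Using the equation, $\int (u')^2 - u^2 = \int u(-u'' - u) = \int u\,\mu = \mu \int u = \mu s$ (integrating by parts, the boundary terms vanishing since $u(\pm\theta)=0$). Hence the infimum equals $\mu s = \dfrac{s^2\cos\theta}{2(\sin\theta - \theta\cos\theta)}$, which is exactly \eqref{eq:stima_inf_2}.

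Finally I would verify the monotonicity claims about $\theta\mapsto \sin\theta - \theta\cos\theta$: its derivative is $\cos\theta - \cos\theta + \theta\sin\theta = \theta\sin\theta > 0$ on $(0,\pi)$, so it is strictly increasing; its value at $0$ is $0$ and at $\pi$ is $\pi$, giving the range $(0,\pi)$. Combined with the sign of $\cos\theta$ (positive on $(0,\pi/2)$, zero at $\pi/2$, negative on $(\pi/2,\pi)$), this yields the stated behavior of the right-hand side: it is positive and decreasing from $+\infty$ to $0$ on $(0,\pi/2)$, vanishes at $\pi/2$, and is negative and decreasing from $0$ to $-s^2/2\pi$ on $(\pi/2,\pi)$ (the limit at $\pi$ being $s^2\cdot(-1)/(2\pi)$).

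The main obstacle is the a priori existence/lower-boundedness step: one must be careful that for $\theta \in (\pi/2,\pi)$ the unconstrained functional $\int (u')^2 - u^2$ is \emph{not} bounded below, so it is essential to use the constraint in an essential way to get boundedness. I would handle this by expanding $u\in W^{1,2}_0(-\theta,\theta)$ in the eigenbasis $\{\cos(\tfrac{(2j-1)\pi x}{2\theta}),\ \sin(\tfrac{j\pi x}{2\theta})\}$ of $-d^2/dx^2$ with Dirichlet conditions: only the first cosine mode $\cos(\tfrac{\pi x}{2\theta})$ has eigenvalue $(\pi/2\theta)^2$ possibly $\le 1$ (when $\theta\ge\pi/2$), all other modes have strictly larger eigenvalue, and since that first mode has nonzero integral over $(-\theta,\theta)$, the constraint $\int u = s$ pins down a lower bound on the functional after a short computation. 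This is essentially Fuglede's argument adapted to the single-constraint, single-interval situation, and it both proves the infimum is finite and that it is attained, legitimizing the Euler--Lagrange computation above.
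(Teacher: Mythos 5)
Your proof is correct and follows the same overall strategy as the paper: establish coercivity of the quadratic form on the constrained affine subspace via a Fourier/spectral argument, then solve the Euler--Lagrange problem explicitly. The two places where you diverge are actually slight improvements on the paper's execution. First, the paper rescales to $(-\pi,\pi)$ and invokes a Wirtinger-type estimate ($\int v^2 - (v')^2 \le c_0^2$) in the standard trigonometric basis, whereas you work directly with the Dirichlet eigenbasis on $(-\theta,\theta)$, so that the observation ``only the lowest mode $\cos(\pi x/2\theta)$ has eigenvalue $\le 1$ when $\theta\ge\pi/2$, and it has nonzero mean'' isolates the unique dangerous direction cleanly. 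Second, your computation of the value of the functional at the minimizer via integration by parts, $\int (u')^2-u^2 = \int u(-u''-u) = \mu\int u = \mu s$, is slicker than the paper's direct substitution of the explicit solution. Two small things to tidy: the odd Dirichlet eigenfunctions on $(-\theta,\theta)$ are $\sin(j\pi x/\theta)$ with eigenvalue $(j\pi/\theta)^2$, not $\sin(j\pi x/2\theta)$ (which does not vanish at $\pm\theta$ for odd $j$); and at $\theta=\pi/2$ your formula $u(x)=\mu(\cos x/\cos\theta-1)$ degenerates, but the scheme still works since there $\mu=0$, $u=(s/2)\cos x$, and the infimum equals $\mu s = 0$, consistent with the right-hand side of \eqref{eq:stima_inf_2}. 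Also, as you half-noticed yourself, the boundary conditions alone give $b\sin\theta=0$, hence $b=0$ since $0<\theta<\pi$, so the symmetry detour is unnecessary.
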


\begin{proof} Given $u\in W_0^{1,2}(-\theta,\theta)$ with $\int_{-\theta}^{\theta}u=s$, let $v(t)=u(t\theta/\pi)$. Thus $v\in W_0^{1,2}(-\pi,\pi)$,
\begin{eqnarray}\label{eq: average-v}
\int_{-\theta}^{\theta}v=s\,\frac{\pi}{\theta}\,,\qquad \int_{-\theta}^\theta (u')^2-u^2=\int_{-\pi}^\pi \frac{\pi}{\theta}\,(v')^2-\frac{\theta}{\pi}\,v^2\,.
\end{eqnarray}
Let $\{\phi_k\}_{k\in\N}\subset L^2(-\pi,\pi)$ be the orthonormal basis of trigonometric functions with $\phi_0=(2\pi)^{-1/2}$, and let $c_k=\int_{-\pi}^\pi v\,\phi_k$ the $k$-th Fourier coefficient of $v$. We have
\begin{eqnarray*}\nonumber
\int_{-\pi}^\pi \frac{\pi}{\theta}\,(v')^2-\frac{\theta}{\pi}\,v^2&=&\left(\frac{\pi}{\theta}-\frac{\theta}{\pi}\right)\int_{-\pi}^\pi (v')^2-\frac{\theta}{\pi}\,\int_{-\pi}^\pi v^2-(v')^2\\
&=& \left(\frac{\pi}{\theta}-\frac{\theta}{\pi}\right)\int_{-\pi}^\pi (v')^2+\frac{\theta}{\pi}\left(\sum_{k=1}^\infty k^2\,c_k^2-\sum_{k=0}^\infty c_k^2\right)\nonumber
\\\label{eq: stima-comp}
&\geq& \left(\frac{\pi}{\theta}-\frac{\theta}{\pi}\right)\int_{-\pi}^\pi (v')^2-\frac{\theta}{\pi}\,c_0^2
\\
&=& \left(\frac{\pi}{\theta}-\frac{\theta}{\pi}\right)\int_{-\pi}^\pi (v')^2-\frac{s^2}{2\theta}\,,
\end{eqnarray*}
where in the last equality we used \eqref{eq: average-v} to compute $c_0$. We have thus proved that
\[
\int_{-\theta}^\theta (u')^2-u^2\ge \bigg(1-\Big(\frac{\theta}{\pi}\Big)^2\bigg)\int_{-\theta}^\theta (u')^2-\frac{1}{2\theta}\Big(\int_{-\theta}^\theta u\Big)^2\,,\qquad\forall u\in W^{1,2}_0(-\theta,\theta)\,,
\]
which immediately lead to prove the existence of minimizers in \eqref{eq:stima_inf_2} by a standard application of the Direct Method. We may thus consider a minimizer $u$ in \eqref{eq:stima_inf_2}, that has to be a smooth solution to the Euler-Lagrange equation
 \begin{equation}\label{euler}
  \left\{\begin{array}
    {l l}
    u''+u=c\,,
    \\
    u(\theta)=u(-\theta)=0\,,
  \end{array}
  \right .
  \end{equation}
for some $c\in\R$. If $\theta=\pi/2$, then $u(t)=\cos(t)$ solves \eqref{euler} (with $c=0$), and, correspondingly, the infimum in \eqref{eq:stima_inf_2} is equal to zero. If, instead, $\theta\ne \pi/2$, then \eqref{euler} has solution
\begin{equation*}
  u(t)=c\,\Big(1-\frac{\cos t}{\cos\theta}\Big)\,,\qquad |t|<\theta\,.
\end{equation*}
A simple computation then gives,
\[
s=\int_{-\theta}^\theta u= 2c\,\Big(\theta-\tan\theta\Big)\,,\qquad\mbox{that is}\qquad c=\frac{s}{2(\theta-\tan\theta)}\,.
\]
Therefore, again by direct computation,
  \begin{eqnarray*}
    \int_{-\theta}^\theta (u')^2-u^2=\frac{-s^2}{2(\theta-\tan\theta)}=\frac{s^2\,\cos\theta}{2(\sin\theta-\theta\,\cos\theta)}\,.
  \end{eqnarray*}
\end{proof}


\begin{lemma}\label{lemma: fuglede}
  For every $\theta\in(0,\pi)$ there exists $M=M(\theta)$ such that, if $u\in W_0^{1,2}(-\theta,\theta)$ with
  \begin{equation}
    \label{fuglede calotte hp}
      \left(\int_{-\theta}^{\theta}u\, \right)^2\le \frac1{M}\,\int_{-\theta}^{\theta}u^2\,,
  \end{equation}
  then
  \begin{equation}
    \label{aleeee}
      \int_{-\theta}^{\theta}(u')^2-u^2\ge\frac14\,\Big(1-\frac{\theta^2}{\pi^2}\Big)\,\int_{-\theta}^{\theta}(u')^2
  +\frac1{2}\,\Big(\frac{\pi^2}{\theta^2}-1\Big)\,\int_{-\theta}^{\theta}\,u^2\,.
  \end{equation}
  A possible value for $M=M(\theta)$ is
  \begin{equation}
    \label{Mtheta}
      M=\frac1{\theta}\,\frac{2\pi^2}{\pi^2-\theta^2}\,.
  \end{equation}
\end{lemma}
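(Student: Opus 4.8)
The plan is to prove the inequality \eqref{aleeee} by using the Fourier/Poincar\'e machinery already set up for Lemma \ref{stima_inf_2}, together with the smallness hypothesis \eqref{fuglede calotte hp} to control the ``zero-mode'' term. First I would rescale: given $u\in W_0^{1,2}(-\theta,\theta)$, set $v(t)=u(t\theta/\pi)$, so that $v\in W_0^{1,2}(-\pi,\pi)$, and expand $v$ in the orthonormal trigonometric basis $\{\phi_k\}_{k\ge0}$ with $\phi_0=(2\pi)^{-1/2}$, writing $c_k=\int_{-\pi}^{\pi}v\,\phi_k$. The hypothesis \eqref{fuglede calotte hp} translates, via $\int_{-\theta}^{\theta}u=(\theta/\pi)\int_{-\pi}^{\pi}v=(\theta/\pi)\sqrt{2\pi}\,c_0$ and $\int_{-\theta}^{\theta}u^2=(\theta/\pi)\int_{-\pi}^{\pi}v^2=(\theta/\pi)\sum_k c_k^2$, into a bound of the form $c_0^2\le \frac{\theta}{2\pi^2 M}\sum_{k\ge0}c_k^2$; with the choice \eqref{Mtheta}, $M=\frac1\theta\,\frac{2\pi^2}{\pi^2-\theta^2}$, this reads $c_0^2\le \frac{\pi^2-\theta^2}{4\pi^4}\cdot\theta\cdot\frac{\pi}{\theta}\sum c_k^2$ — I will track the exact constant, but the point is that $c_0^2$ is a small multiple of $\sum_k c_k^2$, hence also a small multiple of $\sum_{k\ge1}k^2 c_k^2 = \|v'\|_{L^2}^2/\ldots$ (up to the usual normalization).

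The core computation is then the same splitting used in the proof of Lemma \ref{stima_inf_2}: writing everything in terms of $v$,
\begin{equation*}
\int_{-\theta}^{\theta}(u')^2-u^2 = \int_{-\pi}^{\pi}\frac{\pi}{\theta}(v')^2-\frac{\theta}{\pi}v^2 = \Big(\frac\pi\theta-\frac\theta\pi\Big)\int_{-\pi}^{\pi}(v')^2 + \frac\theta\pi\Big(\sum_{k\ge1}k^2c_k^2-\sum_{k\ge0}c_k^2\Big),
\end{equation*}
and the parenthesized Fourier sum is $\ge -c_0^2$ since $\sum_{k\ge1}(k^2-1)c_k^2\ge0$. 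So far this only gives $\int(u')^2-u^2\ge (1-\theta^2/\pi^2)\int(u')^2 - (\theta/\pi)c_0^2$, which is exactly \eqref{sharp poincare}-type information with a harmless error. To reach \eqref{aleeee}, which asks for a \emph{positive} lower bound of the form $\tfrac14(1-\theta^2/\pi^2)\int(u')^2 + \tfrac12(\pi^2/\theta^2-1)\int u^2$, I would not throw away the full $(1-\theta^2/\pi^2)\int(u')^2$: instead I keep a fraction of it (say three quarters) to absorb the $c_0^2$ error term using the smallness bound, and use the remaining quarter together with the sharp Poincar\'e inequality $\int_{-\theta}^{\theta}(u')^2\ge(\pi^2/(4\theta^2))\int_{-\theta}^{\theta}u^2$ — careful: the constant in \eqref{sharp poincare} is $(\pi/2\theta)^2$, so a fraction $\lambda$ of $\int(u')^2$ dominates $\lambda(\pi^2/4\theta^2)\int u^2$; choosing $\lambda$ appropriately (the bookkeeping will show $\lambda=1/2$ of the coefficient $(1-\theta^2/\pi^2)$ works after matching constants, possibly needing to sharpen to the full Poincar\'e eigenvalue rather than the truncated one) yields the second term $\tfrac12(\pi^2/\theta^2-1)\int u^2$ once one simplifies $(1-\theta^2/\pi^2)\cdot(\pi^2/\theta^2)=\pi^2/\theta^2-1$.

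Concretely, the steps in order: (1) rescale to $(-\pi,\pi)$ and set up the Fourier expansion; (2) rewrite hypothesis \eqref{fuglede calotte hp} as $c_0^2\le(\text{small})\sum_{k\ge0}c_k^2$ and verify that the choice \eqref{Mtheta} makes ``small'' equal to precisely the value needed below — this is where the specific constant $M$ gets pinned down; (3) perform the splitting above and bound the Fourier sum below by $-c_0^2$; (4) use $\sum_{k\ge0}c_k^2=\int_{-\pi}^{\pi}v^2$ and $\sum_{k\ge1}k^2c_k^2=\int_{-\pi}^{\pi}(v')^2$ to convert the $c_0^2$ bound into $c_0^2\le C(\theta)\int_{-\pi}^{\pi}(v')^2$ (dropping the $c_0^2$ on the right, which only improves the constant), absorb it into a fraction of the leading term, and recombine; (5) translate back from $v$ to $u$ via the scaling identities and simplify the algebraic identity $(1-\theta^2/\pi^2)\cdot\pi^2/\theta^2 = \pi^2/\theta^2-1$ to land on \eqref{aleeee}. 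The main obstacle is step (2)/(4): making the constant bookkeeping tight enough that the \emph{explicit} value \eqref{Mtheta} — not merely some $M(\theta)$ — actually suffices. One must be slightly careful whether to use the crude $c_0^2\le\sum c_k^2$ or the sharper $c_0^2\le\sum_{k\ge1}k^2c_k^2/(\text{something})$ and to track whether the $-\frac\theta\pi\sum_{k\ge0}c_k^2$ term (versus $-\frac\theta\pi c_0^2$) is being used; I expect the cleanest route is to keep the inequality $\sum_{k\ge1}k^2c_k^2-\sum_{k\ge0}c_k^2\ge -c_0^2 + \sum_{k\ge1}(k^2-1)c_k^2$ and notice that $\sum_{k\ge1}(k^2-1)c_k^2\ge 0$ suffices, so no sharpening beyond $-c_0^2$ is needed and the whole delicacy is concentrated in choosing how much of $(\tfrac\pi\theta-\tfrac\theta\pi)\int(v')^2$ to spend on absorbing $c_0^2$.
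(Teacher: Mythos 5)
Your overall scaffold — rescale $u$ to $v\in W_0^{1,2}(-\pi,\pi)$, Fourier expand, translate the hypothesis \eqref{fuglede calotte hp} into a bound $c_0^2\le (\text{small})\int v^2$, keep a fraction of $\int(u')^2$ and convert the rest into $\int u^2$ — is the same as the paper's, so the route you describe is not a different proof but a slightly befogged statement of the same one. There is, however, a real gap in the proposal that you would hit as soon as you did the bookkeeping you defer: the sharp Dirichlet--Poincar\'e inequality \eqref{sharp poincare}, $\int_{-\theta}^\theta(u')^2\ge\frac{\pi^2}{4\theta^2}\int_{-\theta}^\theta u^2$, is \emph{too weak by a factor of four} to produce the coefficient $\frac12\big(\frac{\pi^2}{\theta^2}-1\big)$ on $\int u^2$. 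You in fact tacitly acknowledge this, since the algebraic cancellation you invoke, $\big(1-\tfrac{\theta^2}{\pi^2}\big)\tfrac{\pi^2}{\theta^2}=\tfrac{\pi^2}{\theta^2}-1$, uses $\pi^2/\theta^2$, not $\pi^2/(4\theta^2)$. What you actually need is the ``Poincar\'e with a zero-mode error'' that the Fourier split hands you, namely $\int_{-\pi}^\pi (v')^2=\sum_{k\ge1}k^2c_k^2\ge\sum_{k\ge1}c_k^2=\int_{-\pi}^\pi v^2-c_0^2$, which after unscaling reads
\[
\int_{-\theta}^{\theta}(u')^2\;\ge\;\frac{\pi^2}{\theta^2}\int_{-\theta}^{\theta}u^2\;-\;\frac{\pi}{\theta}\,c_0^2\,.
\]
It is this four-times-stronger constant $\pi^2/\theta^2$, available only at the price of a $c_0^2$ term, that the hypothesis \eqref{fuglede calotte hp} with the specific $M(\theta)$ of \eqref{Mtheta} is calibrated to control. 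Your hedge about ``sharpening to the full Poincar\'e eigenvalue rather than the truncated one'' is pointing at the right phenomenon, but there is no sharper unconditional eigenvalue to invoke: the gain comes precisely from retaining $-c_0^2$ instead of discarding it.

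Related to this, your announced splits are not consistent with each other: you first propose keeping three quarters of $(1-\theta^2/\pi^2)\int(u')^2$ to absorb $c_0^2$ and one quarter for Poincar\'e, then say ``$\lambda=1/2$ of the coefficient $(1-\theta^2/\pi^2)$'' for the Poincar\'e step, and \eqref{aleeee} separately requires $\frac14(1-\theta^2/\pi^2)\int(u')^2$ to survive unconverted. These three don't fit into a single allocation of the budget $(1-\theta^2/\pi^2)\int(u')^2$. The clean bookkeeping, which is what the paper does, is to set $\lambda$ equal to the desired output coefficient $\frac14\big(1-\frac{\theta^2}{\pi^2}\big)$, estimate the remainder $(1-\lambda)\int(u')^2-\int u^2$ in one stroke through the Fourier inequality above together with $c_0^2\le\frac{\pi}{2\theta^2 M}\int u^2$, obtaining the lower bound $\big(\frac{\pi^2}{\theta^2}(1-\frac1{2\theta M})(1-\lambda)-1\big)\int u^2$, and then check that with $M$ as in \eqref{Mtheta} (so $\frac1{2\theta M}=\lambda$) this coefficient is at least $\frac12\big(\frac{\pi^2}{\theta^2}-1\big)$. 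With that repair, your argument is the paper's.
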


\begin{proof}
  Given $u\in W_0^{1,2}(-\theta,\theta)$, define $v\in W_0^{1,2}(-\pi,\pi)$ as $v(t)=u(t\,\theta/\pi)$. By \eqref{fuglede calotte hp},
  \begin{equation}
    \label{fuglede calotte hp su v}
      \left(\int_{-\pi}^\pi v\,\right)^2\le \frac{\pi}{\theta\,M}\,\int_{-\pi}^\pi v^2\,,
  \end{equation}
Let $\phi_k$ and $c_k$ be defined as in the proof of Lemma \ref{stima_inf_2}. For every $\lambda\in(0,1)$ we have
  \begin{eqnarray}\nonumber
 (1-\lambda)\int_{-\theta}^\theta(u')^2-\int_{-\theta}^\theta u^2&=&\frac\pi\theta(1-\lambda)\sum_{k=1}^\infty k^2c_k^2-\frac{\theta}\pi\sum_{k=0}^\infty c_k^2\\
  &\geq&\nonumber \left(\frac\pi\theta(1-\lambda)-\frac\theta\pi\right)\sum_{k=0}^\infty c_k^2-\frac{\pi}{\theta}(1-\lambda)c_0^2\\
  &\geq&
  \frac\pi\theta\left(\frac\pi\theta(1-\lambda)-\frac\theta\pi-\frac{\pi(1-\lambda)}{2\theta^2M}\right)\int_{-\theta}^\theta u^2,
  \label{lemma: stima-fourier-c0}
  \end{eqnarray}
  where we have estimated $c_0$ thanks to \eqref{fuglede calotte hp} as follows,
  \begin{eqnarray*}
  c_0^2=\frac{1}{2\pi}\left(\int_{-\pi}^{\pi}v\,\right)^2\leq\frac{1}{2\theta M}\int_{-\pi}^{\pi}v^2=\frac{\pi}{2\theta^2 M}\int_{-\theta}^{\theta}u^2\,.
  \end{eqnarray*}
  Let us now rearrange \eqref{lemma: stima-fourier-c0} as
  \[
  \int_{-\theta}^\theta(u')^2- u^2
  \ge\l\int_{-\theta}^\theta(u')^2+
  \bigg( \frac{\pi^2}{\theta^2}\,\Big(1-\frac{1}{2\theta\,M}\Big) (1-\lambda)-1\bigg)\,\int_{-\theta}^\theta u^2\,.
  \]
  We prove \eqref{aleeee} by choosing $M$ as in \eqref{Mtheta}, by setting
  \begin{eqnarray*}
  \lambda=\frac14\Big(1-\frac{\theta^2}{\pi^2}\Big)=\frac14\frac{\theta^2}{\pi^2}\Big(\frac{\pi^2}{\theta^2}-1\Big)\,,
  \end{eqnarray*}
  and finally noticing that
  \[
  \frac{\pi^2}{\theta^2}\,\Big(1-\frac{1}{2\theta\,M}\Big) (1-\lambda)-1\ge
  \frac{\pi^2}{\theta^2}\,-1-\frac{\pi^2}{\theta^2}\Big(\l+\frac1{2\theta M}\Big)=\frac12\Big(\frac{\pi^2}{\theta^2}\,-1\Big)\,.
  \]
\end{proof}

We finally prove Theorem \ref{thm main 2} in the case of $(\e,\s)$-perturbations.

\begin{theorem}\label{thm Fuglede double-bubble}
  For every $m_2\ge m_1>0$, there exist positive constants $\e_1$, $\s_1$, and $\k_1$ (depending on $m_1/m_2$ only) with the following property. If  $\E$ is an $(\e,\s)$-perturbation of $\E_0$ with $\vol(\E_0)=(m_1,m_2)$, and if $\e<\e_1$ and $|\s|<\s_1$, then, in the case $m_2>m_1$
  \begin{equation}
    \label{stab}
      P(\E)-P(\E_0)\ge\k_1\,\Big(\s^2+\sum_{k=0}^2\,r_k\int_{-\theta_k}^{\theta_k}u_k^2\Big)\,,
  \end{equation}
  while, in the case $m_2=m_1$ (and $r_2=r_1=r$),
  \begin{equation}
    \label{stab2}
      P(\E)-P(\E_0)\ge\k_1\,\Big(\s^2+r\,\int_{-\sqrt{3}/2}^{\sqrt{3}/2}v_0^2+\sum_{k=1}^2\,r\,\int_{-2\pi/3}^{2\pi/3} u_k^2\Big)\,.
  \end{equation}
  In both cases, by Lemma \ref{lemma upper}, there exists $\k_1^*$ depending on $m_1$ and $m_2$ such that
  \begin{equation}
    P(\E)\ge P(\E_0)\big\{1+\k_1^*\a(\E)^2\big\}\,.
  \end{equation}
\end{theorem}

\begin{proof} {\it Step one:} Let $\theta\in(0,\pi)$, and let $M(\theta)$ be as in \eqref{Mtheta}. We notice that for every $\theta\in(0,\pi)$ there exists $\e(\theta)>0$ such that if
\[
\|u\|_{C^0(-\theta,\theta)}\le\e(\theta)\,,\qquad \Big(\int_{-\theta}^{\theta}u+\frac{u^2}2\Big)^2\le\frac{1}{2\,M(\theta)}\int_{-\theta}^{\theta}u^2\,,
\]
then
\[
\Big(\int_{-\theta}^{\theta}u\Big)^2\le\frac{1}{M(\theta)}\int_{-\theta}^{\theta}u^2\,.
\]
In the rest of the proof, given $m_1$ and $m_2$, and thus fixed $\theta_1$ and $\theta_2$ according to \eqref{r yeah 1} and \eqref{r yeah 2}, we shall assume to work with $(\e,\s)$-perturbations of $\E_0$ with $\e<\min\{\e(\theta_1),\e(\theta_2)\}$.

\medskip

\noindent {\it Step two:} We start considering the case $m_2>m_1$. If $\E$ is an $(\e,\s)$-perturbation of $\E_0$ with functions $u_0$, $u_1$, and $u_2$, then, for $t>0$, $t\,\E$ is an $(\e,\s)$-perturbation of $t\,\E_0$ with the same functions $u_0$, $u_1$, and $u_2$. Therefore, without loss of generality, in the following we may assume that $r_2=1$. For the sake of symmetry (and, thus, of clarity) we shall keep writing $r_2$ in place of $1$ in the following formulas, until we exploit this scaling assumption. Let us now set
\[
I_k=\int_{-\theta_k}^{\theta_k}u_k+\frac{u_k^2}2\,,\qquad k=0,1,2\,,
\]
so that the volume constraints \eqref{gippo1} and \eqref{gippo2} take the form
\begin{eqnarray}\label{angelino1}
  I_0&=&-\Big(\frac{r_1}{r_0}\Big)^2\,I_1+\frac{m_1}{r_0^2}\Big(\frac1{(1+\s)^2}-1\Big)\,,
  \\\label{angelino2}
  I_0&=&\Big(\frac{r_2}{r_0}\Big)^2\,I_2-\frac{m_2}{r_0^2}\Big(\frac1{(1+\s)^2}-1\Big)\,.
\end{eqnarray}
Multiplying \eqref{angelino1} by $m_2/(m_1+m_2)$, \eqref{angelino2} by $m_1/(m_1+m_2)$, and then adding up, we find
\begin{equation}\label{Izero12}
I_0 = \frac{m_1}{m_1+m_2}\,\Big(\frac{r_2}{r_0}\Big)^2\,I_2
- \frac{m_2}{m_1+m_2}\,\Big(\frac{r_1}{r_0}\Big)^2\,I_1\,.
\end{equation}
Similarly, multiplying both \eqref{angelino1} and \eqref{angelino2} by $r_0^2$, and then subtracting the resulting identities, we come to
$r_1^2\,I_1+r_2^2\,I_2=(m_1+m_2)((1+\s)^{-2}-1)$, which gives
\begin{equation}\label{phis12}
\s^2+\O(|\s|^3)= \frac{(r_{1}^{2}I_1 + r_{2}^{2}I_2)^2}{4(m_1+m_2)^2}\,.
\end{equation}
By \eqref{phis12} we deduce that
\begin{equation}
\label{sigma12}
\sigma^{2}+\O(|\s|^3) \leq \frac{r_{1}^{4}I_1^{2} + r_{2}^{4}I_2^{2}}{2(m_1+m_2)^{2}}+\e\,\O(\|u\|_{L^2}^2)\,,
\end{equation}
and, since $I_k \leq C\int_{-\theta_k}^{\theta_k}u_k^2$,  that $|\sigma|=\O(\|u\|_{L^2})$. (This is a reflection of the fact that if the $u_k$'s are all zero, then, by the volume constraint, we necessarily have $\s=0$.) Thus \eqref{stima deficit 1} gives
\begin{equation}\label{stimadef-op}
2\,\frac{P(\E)-P(\E_0)}{1+\s} = \sum_{k=0}^{2} r_{k}\int_{-\theta_k}^{\theta_k} (u'_{k})^{2} - u_{k}^{2} + P(\E_0)\,\sigma^{2} +(\e+|\s|)\,\O(\|u\|_{W^{1,2}}^2)\,.
\end{equation}
We now claim that, for a suitable constant $C$ (depending on $\E_0$) we have
\begin{eqnarray}\label{ai}
C\,(P(\E)-P(\E_0)) \geq r_1\,I_1^2+r_2\,I_2^2+(\e+|\s|)\,\O(\|u\|_{W^{1,2}}^2)\,.
\end{eqnarray}
To this end, let us set for the sake of brevity
\begin{equation}
  \label{gthetaaaaaaaaaaa}
g(\theta)=\frac{\cos\theta}{2(\sin\theta-\theta\cos\theta)}\,,\qquad 0<\theta<\pi\,.
\end{equation}
By Lemma \ref{stima_inf_2}, for $k=0,1,2$ we have
\begin{equation}\label{Dk-op}
\int_{-\theta_k}^{\theta_k} (u'_{k})^{2} - u_{k}^{2} \geq g(\theta_k)\,\Big(I_k- \int_{-\theta_k}^{\theta_k} \frac{u_{k}^{2}}2\Big)^{2} = g(\theta_k)\,I_k^{2}+ \e\,\O(\|u\|_{L^2}^2)\,,
\end{equation}
and thus, by inserting \eqref{phis12} and \eqref{Dk-op} into \eqref{stimadef-op},
\begin{eqnarray}\label{stimadef-op2}
2\,\frac{P(\E)-P(\E_0)}{1+\s} &\geq & \sum_{k=0}^{2}r_k\,g(\theta_k) I_k^{2} + \frac{P(\E_0)(r_{1}^{2} I_1 + r_{2}^{2}I_2)^{2}}{4(m_1+m_2)^{2}} +(\e+|\s|)\,\O(\|u\|_{W^{1,2}}^2)\nonumber\\
& = & \b_1\,r_1\,I_1^2 + \b_2\,r_2\,I_2^2 + 2\b_3\, \sqrt{r_1r_2}\,I_1 \,I_2  +(\e+|\s|)\,\O(\|u\|_{W^{1,2}}^2)\,.
\end{eqnarray}
Here, by taking into account \eqref{Izero12}, we have set
\begin{eqnarray}\label{b1}
\b_{1} &=& g(\theta_0)\,\frac{r_{1}^3}{r_0^3}\,\frac{m_2^{2}}{(m_1+m_2)^{2}} + g(\theta_1) + \frac{r_{1}^3}4\,\frac{P(\E_0)}{(m_1+m_2)^{2}}\,,
\\\label{b2}
\b_{2} &=& g(\theta_0)\,\frac{r_{2}^{3}}{r_0^3}\,\frac{m_1^{2}}{(m_1+m_2)^{2}} + g(\theta_{2})+ \frac{r_{2}^3}{4}\,\frac{P(\E_0)}{(m_1+m_2)^{2}}\,,
\\\label{b3}
\b_{3} &=& -g(\theta_0)\,\frac{r_1^{3/2}\,r_2^{3/2}}{r_0^3}\,\frac{m_1\,m_2}{(m_1+m_2)^2}+\frac{r_{1}^{3/2}r_{2}^{3/2}}4\,\frac{P(\E_0)}{(m_1+m_2)^{2}}\,.
\end{eqnarray}
The quadratic form in $(\sqrt{r_1}\,I_1,\sqrt{r_2}\,I_2)$ on the right-hand side \eqref{stimadef-op2} is coercive: indeed, it suffices to show the existence of $\beta_*>0$ (depending on $m_1/m_2$ only) such that
\begin{equation}
  \label{betabasso}
  \min\{\beta_1,\beta_1\beta_2-\beta_3^2\}\ge\beta_*\,.
\end{equation}
To this end, let us note that, having set $r_2=1$, it turns out that $r_0$, $\theta_0$, $\theta_1$, $\theta_2$, $m_1$, and $m_2$ are all explicit functions of $r_1\in(0,1)$ according to equations \eqref{r2}, \eqref{r yeah}, \eqref{r yeah 1}, \eqref{r yeah 2}, \eqref{formulam1}, and \eqref{formulam2}. Correspondingly, the coefficients $\b_k$ can be easily expressed as functions of $r_1\in(0,1)$, and the validity of \eqref{betabasso} can be deduced by a numerical plot; see Figure \ref{fig: beta}.
\begin{figure}
 \epsfig{file=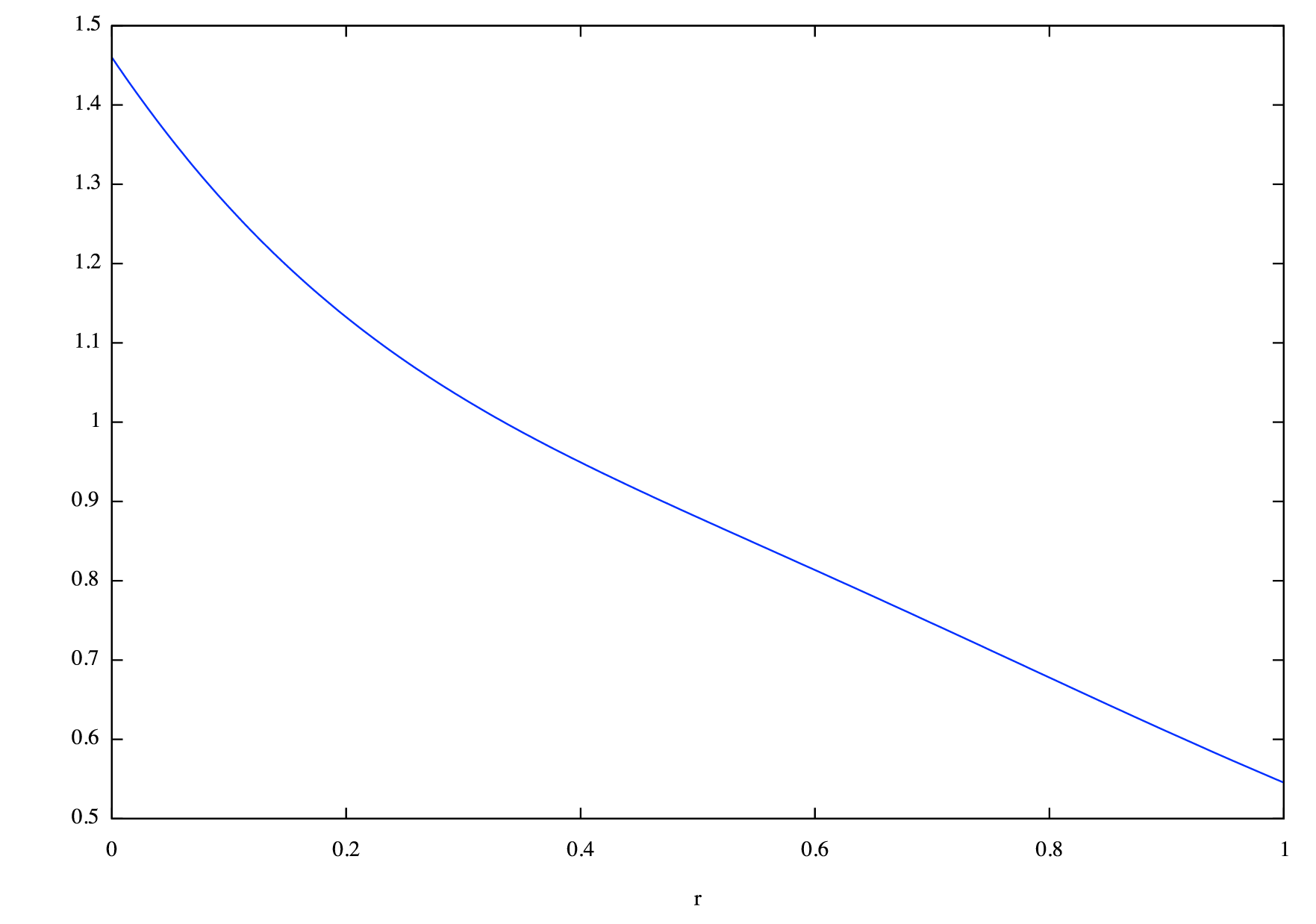, height=4.2cm} \hspace{0.4cm} \epsfig{file=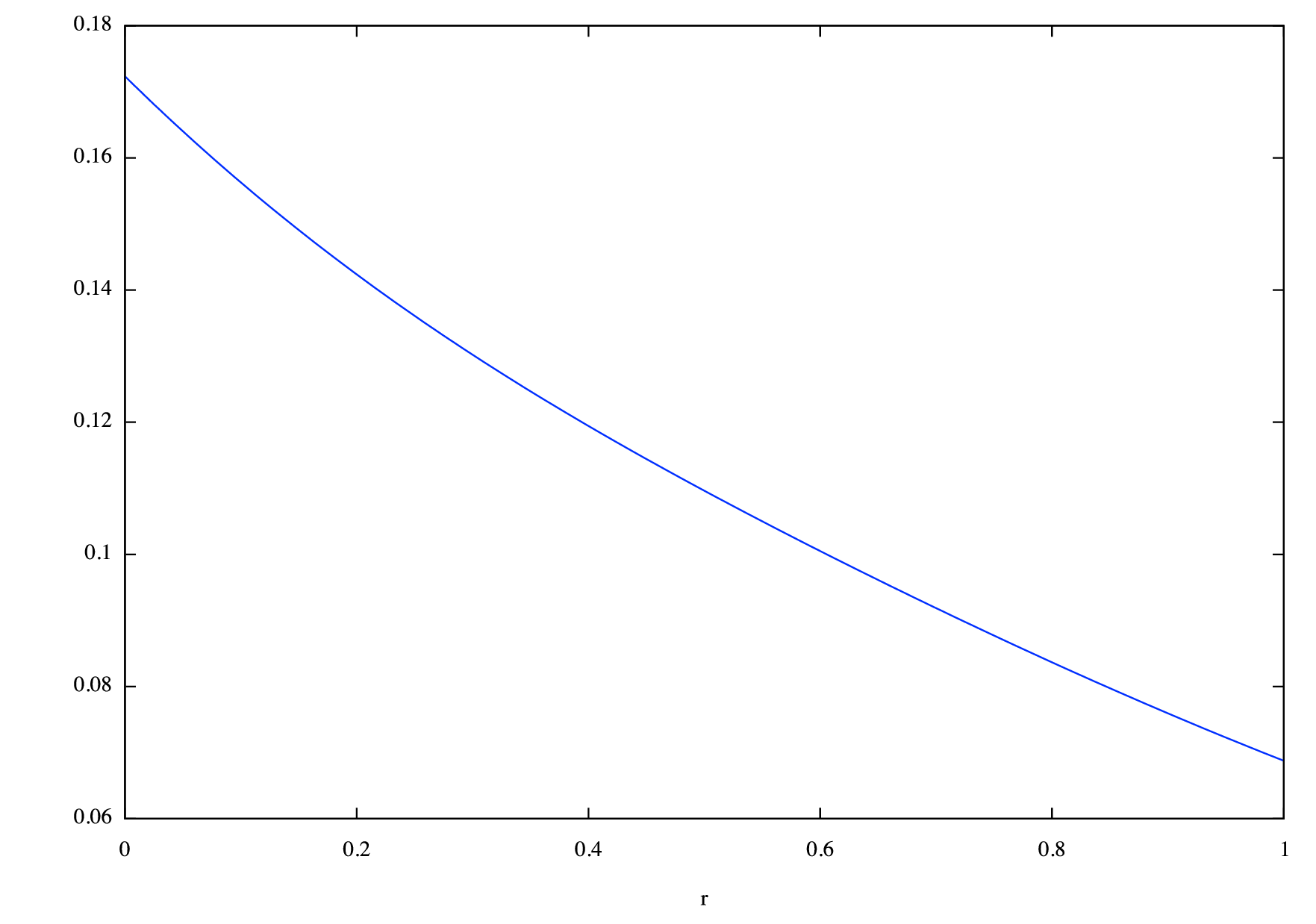, height=4.2cm}\caption{{\small Plotting of $\beta_1(r)$ (left) and of $(\beta_1(r)\beta_2(r)-\beta_3(r)^2)/r$ (right) for $r\in(0,1)$. In particular, $\beta_1(r)\beta_2(r)-\beta_3(r)^2\approx r$ for $r$ small. The plots have been drawn by Maxima v.5.28.0 (http://maxima.sourceforge.net) starting from equations $r_2=1$, $r_1=r\in(0,1)$, \eqref{r2}, \eqref{r yeah}, \eqref{r yeah 1}, \eqref{r yeah 2}, \eqref{gippo rule}, \eqref{formulam1}, \eqref{formulam2}, \eqref{gthetaaaaaaaaaaa}, \eqref{b1}, \eqref{b2}, and \eqref{b3}.}}\label{fig: beta}
\end{figure}
As a consequence of \eqref{betabasso}, and up to decrease the value of $\beta_*$, we find
\[
\beta_1\,r_1\,I_1^2 + \b_2\,r_2\,I_2^2 + 2\b_3\, \sqrt{r_1r_2}\,I_1\, I_2\geq \beta_* (r_1\,I_1^2+r_2\,I_2^2)\,.
\]
We combine this inequality with \eqref{stimadef-op2} to prove \eqref{ai}, as claimed. Now, by \eqref{sigma12} and \eqref{ai},
\begin{eqnarray}\label{aii}
C\,(P(\E)-P(\E_0)) \geq \s^2+r_1\,I_1^2+r_2\,I_2^2+(\e+|\s|)\,\O(\|u\|_{W^{1,2}}^2)\,.
\end{eqnarray}
By the choice of $\e$ performed in step one, we now notice that, if for some $k=1,2$ we have
\[
I_k^2\leq  \frac{1}{2M(\theta_k)}\,\int_{-\theta_k}^{\theta_k}u_k^2\,,
\]
then, by Lemma \ref{lemma: fuglede},
\begin{equation}
  \label{ho1}
  \int_{-\theta_k}^{\theta_k} (u'_{k})^{2} - u_{k}^{2}\ge
\frac{1}4\,\Big(1-\frac{\theta_k^2}{\pi^2}\Big)\,\int_{-\theta_k}^{\theta_k}(u_k')^2
  +\frac{1}{2}\,\Big(\frac{\pi^2}{\theta_k^2}-1\Big)\,\int_{-\theta_k}^{\theta_k}\,u_k^2\,.
\end{equation}
Therefore, for $k=1,2$, either \eqref{ho1} holds true, or
\begin{equation}
  \label{ho2}
  I_k^2\ge \frac{1}{2M(\theta_k)}\,\int_{-\theta_k}^{\theta_k}u_k^2\,.
\end{equation}
Concerning $u_0$, let us notice that, by the sharp Poincar\'e inequality \eqref{sharp poincare}, and since $\theta_0<\pi/3$,
\[
\int_{-\theta_0}^{\theta_0} (u'_0)^{2}\ge\Big(\frac\pi{2\theta_0}\Big)^2\int_{-\theta_0}^{\theta_0} u_0^{2}\ge \frac94\,\int_{-\theta_0}^{\theta_0} u_0^{2}\,,
\]
which gives
\begin{equation}
  \label{ho3}
  \int_{-\theta_0}^{\theta_0} (u'_0)^{2}-u_0^2\ge\frac13\int_{-\theta_0}^{\theta_0} (u'_0)^2+\Big(\frac32-1\Big)\int_{-\theta_0}^{\theta_0}u_0^2\,.
\end{equation}
We are now going to use \eqref{ho1}, \eqref{ho2}, and \eqref{ho3} together with \eqref{aii} to prove that, for some constant $C$ depending on $\E_0$, we always have
\begin{equation}
  \label{heyman2}
  C\,(P(\E)-P(\E_0))\ge\s^2+\sum_{k=0}^2r_k\int_{-\theta_k}^{\theta_k}(u_k')^2+u_k^2\,.
\end{equation}
We divide the argument in three cases:

\medskip

\noindent {\it Case one}: We assume that \eqref{ho1} holds true for $k=1,2$. By this assumption, \eqref{stimadef-op}, and \eqref{ho3},
\begin{equation}
  \label{heyman}
    C\,(P(\E)-P(\E_0))\ge\s^2+\sum_{k=0}^{2} r_{k}\int_{-\theta_k}^{\theta_k} (u'_{k})^{2} + u_{k}^{2}+(\e+|\s|)\,\O(\|u\|_{W^{1,2}}^2)\,,
\end{equation}
from which \eqref{heyman2} is easily proved.

\medskip

\noindent {\it Case two}: We assume that \eqref{ho2} holds true for $k=1,2$. In this case, by \eqref{stimadef-op} we obtain
\begin{eqnarray}\nonumber
  2\,\frac{P(\E)-P(\E_0)}{1+\s}&\ge&\tau\,\Big(\sum_{k=0}^{2} r_{k}\int_{-\theta_k}^{\theta_k} (u'_{k})^{2} - u_{k}^{2}\Big)+(1-\tau)\,2\,\frac{P(\E)-P(\E_0)}{1+\s}
  \\\nonumber
  &&+(\e+|\s|)\,\O(\|u\|_{W^{1,2}}^2)
  \\\nonumber
  \mbox{\small{(by \eqref{ho3})}}\qquad&\ge&
  \tau\Big(\frac{r_0}3\int_{-\theta_0}^{\theta_0} (u'_0)^2+\frac{r_0}2\int_{-\theta_0}^{\theta_0}u_0^2 \Big)+\tau\,\sum_{k=1}^{2} r_{k}\int_{-\theta_k}^{\theta_k} (u'_{k})^{2}-u_k^2
  \\\nonumber
  \mbox{\small{(by \eqref{aii})}}\qquad&&+\frac{1-\tau}C\,\Big(\s^2+r_1\,I_1^2+r_2\,I_2^2\Big)+(\e+|\s|)\,\O(\|u\|_{W^{1,2}}^2)
  \\\nonumber
  &\ge&
  \tau\Big(\frac{r_0}3\int_{-\theta_0}^{\theta_0} (u'_0)^2+\frac{r_0}2\int_{-\theta_0}^{\theta_0}u_0^2 \Big)+\tau\,\sum_{k=1}^{2} r_{k}\int_{-\theta_k}^{\theta_k} (u'_{k})^{2}-u_k^2
  \\\nonumber
  \mbox{\small{(by \eqref{ho2} for $k=1,2$)}}\qquad&&+\frac{1-\tau}C\,\Big(\s^2+\sum_{k=1}^2\frac{r_k}{2M(\theta_k)}\,\int_{-\theta_k}^{\theta_k}u_k^2\Big)
  +(\e+|\s|)\,\O(\|u\|_{W^{1,2}}^2)
   \\\nonumber
  &\ge&
  \tau\Big(\frac{r_0}3\int_{-\theta_0}^{\theta_0} (u'_0)^2+\frac{r_0}2\int_{-\theta_0}^{\theta_0}u_0^2 \Big)+\tau\,\sum_{k=1}^{2} r_{k}\int_{-\theta_k}^{\theta_k} (u'_{k})^{2}
  \\\nonumber
 &&+\frac{1-\tau}{2C}\,\Big(\s^2+\sum_{k=1}^2\frac{r_k}{2M(\theta_k)}\,\int_{-\theta_k}^{\theta_k}u_k^2\Big)
 +(\e+|\s|)\,\O(\|u\|_{W^{1,2}}^2)\,,
\end{eqnarray}
where in the last inequality we have absorbed the negative terms in $u_k^2$, $k=1,2$, by choosing $\tau$ so small to have
\[
\tau\le\frac{1-\tau}{4\,C}\min_{k=1,2}\,\frac{1}{M(\theta_k)}\,.
\]
We have thus proved \eqref{heyman}, and thus \eqref{heyman2}, up to suitably choose $\e$ and $C$.

\medskip

\noindent {\it Case three:} We assume that \eqref{ho1} holds true for $k=1$, while \eqref{ho2} holds true for $k=2$. By arguing as in case two we find, for any $\tau\in(0,1)$,
\begin{eqnarray*}
  2\,\frac{P(\E)-P(\E_0)}{1+\s}&\ge&
  \tau\Big(\frac{r_0}3\int_{-\theta_0}^{\theta_0} (u'_0)^2+\frac{r_0}2\int_{-\theta_0}^{\theta_0}u_0^2 \Big)+\tau\,\sum_{k=1}^{2} r_{k}\int_{-\theta_k}^{\theta_k} (u'_{k})^{2}-u_k^2
  \\\nonumber
  &&+\frac{1-\tau}C\,\Big(\s^2+r_1\,I_1^2+r_2\,I_2^2\Big)
  +(\e+|\s|)\,\O(\|u\|_{W^{1,2}}^2)\,.
\end{eqnarray*}
By using \eqref{ho1} for $k=1$ and \eqref{ho2} for $k=2$, and discarding some positive terms, we find
\begin{eqnarray*}
  2\,\frac{P(\E)-P(\E_0)}{1+\s}&\ge&
  \tau\,c\,\bigg(r_0\,\int_{-\theta_0}^{\theta_0} \Big((u'_0)^2+u_0^2\Big)+r_{1}\,\int_{-\theta_1}^{\theta_1} \Big((u'_1)^{2}+u_1^2\Big)+r_2\,\int_{-\theta_2}^{\theta_2} (u'_2)^{2}\bigg)
  \\\nonumber
  &&+\frac{1-\tau}C\,\Big(\s^2+\frac{r_2}{2M(\theta_2)}\,\int_{-\theta_2}^{\theta_2}u_2^2\Big)-\tau\,r_2\int_{-\theta_2}^{\theta_2} u_2^2+
 (\e+|\s|)\,\O(\|u\|_{W^{1,2}}^2)\,,
\end{eqnarray*}
for some positive constant $c$ depending on $\E_0$. As in case two, we may choose $\tau$ small enough to have the negative term in $u_2^2$ absorbed by its positive counterpart, and come to prove \eqref{heyman}. Finally, when \eqref{ho1} holds true for $k=2$ and \eqref{ho2} holds true for $k=1$ (note that, formally, this is a fourth different case, as $m_2>m_1$), then we just repeat the very same argument. Summarizing, we have proved the validity of \eqref{heyman2}, which of course implies \eqref{stab}. The theorem is proved in the case $m_2>m_1$.

\medskip

\noindent {\it Step three:} We now address the case $m_2=m_1$. In this case we set $r=r_1=r_2$, $m=m_1=m_2$, and $\theta=\theta_1=\theta_2=2\pi/3$. Once again, up to scaling, we may assume that $r=1$, so that
\[
m=\frac{2\pi}3+\frac{\sqrt{3}}{4}\,,\qquad P(\E_0)=4\,m=\frac{8\,\pi}3+\sqrt{3}\,.
\]
The volume constraints now take the form
\begin{eqnarray*}
  \Big((1+\s)^{-2}-1\Big)\,m=I_1+\int_{-\sqrt{3}/2}^{\sqrt{3}/2}v_0=I_2-\int_{-\sqrt{3}/2}^{\sqrt{3}/2}v_0\,,
\end{eqnarray*}
so that, by arguing as in step one, we find, in analogy to \eqref{Izero12} and \eqref{phis12},
\begin{equation}
  \label{febbre1}
    \int_{-\sqrt{3}/2}^{\sqrt{3}/2}v_0=\frac{I_2-I_1}2\,,\qquad
  \s^2+\O(|\s|^3)=\frac{(I_1+I_2)^2}{4\,m^2}\,.
\end{equation}
By Lemma \ref{stima_inf_2} we have \eqref{Dk-op} for $k=1,2$, and, similarly,
\begin{eqnarray}\label{febbre2}
\int_{-\sqrt{3}/2}^{\sqrt{3}/2}(v_0')^2\ge \int_{-\sqrt{3}/2}^{\sqrt{3}/2}v_0^2+g\Big(\frac{\sqrt{3}}2\Big)\,\Big(\int_{-\sqrt{3}/2}^{\sqrt{3}/2}v_0\Big)^2
=\int_{-\sqrt{3}/2}^{\sqrt{3}/2}v_0^2+g\Big(\frac{\sqrt{3}}2\Big)\,\frac{(I_2-I_1)^2}4\,.
\end{eqnarray}
(Notice that $\sqrt{3}/2<\pi/2$, thus $g(\sqrt{3}/2)$ is positive.) By \eqref{febbre1} and \eqref{febbre2}, and since $|\s|=\O(\|u\|_{L^2}^2)$, from  \eqref{stima deficit 1bis} we deduce
 \begin{eqnarray*}\nonumber
  2\,\frac{P(\E)-P(\E_0)}{1+\s}
  &=&\int_{-\,\sqrt{3}/2}^{\,\sqrt{3}/2}(v'_0)^2+
  \sum_{k=1}^2\,\int_{-2\pi/3}^{2\pi/3}(u'_k)^2-u_k^2+\frac{\s^2}{2}\,P(\E_0)
  +(\e+|\s|)\,\O(\|u\|_{W^{1,2}}^2)
  \\
  &\ge&\int_{-\sqrt{3}/2}^{\sqrt{3}/2}v_0^2+g\Big(\frac{\sqrt{3}}2\Big)\,\frac{(I_2-I_1)^2}4+g\Big(\frac{2\pi}3\Big)(I_1^2+I_2^2)+\frac{(I_1+I_2)^2}{2\,m}
  \\&&+(\e+|\s|)\,\O(\|u\|_{W^{1,2}}^2)
  \\
  &\ge&\int_{-\sqrt{3}/2}^{\sqrt{3}/2}v_0^2+\a_1\,I_1^2+\a_2\,I_2^2+2\a_3\,I_1\,I_2\,,+(\e+|\s|)\,\O(\|u\|_{W^{1,2}}^2)\,,
  \end{eqnarray*}
 provided we set
 \begin{eqnarray*}
   &&\a_1=\a_2=\frac14\,g\Big(\frac{\sqrt{3}}2\Big)+g\Big(\frac{2\pi}3\Big)+\frac1{2m}
   \\
   &&\a_3=-\frac14\,g\Big(\frac{\sqrt{3}}2\Big)+\frac1{2m}\,.
 \end{eqnarray*}
 By direct evaluation we see that $\a_1>0$ and $\a_1\a_2-\a_3^2>0$. Therefore there exists $\a_*>0$ such that $\a_1\,I_1^2+\a_2\,I_2^2+2\a_3\,I_1\,I_2\ge\a_*(I_1^2+I_2^2)$, and thus
 \begin{eqnarray}\label{aiiiii}
  2\,\frac{P(\E)-P(\E_0)}{1+\s}
  \ge\int_{-\sqrt{3}/2}^{\sqrt{3}/2}v_0^2+\a_*\,(I_1^2+I_2^2)+(\e+|\s|)\,\O(\|u\|_{W^{1,2}}^2)\,.
  \end{eqnarray}
 We conclude the proof exactly as in step two, with \eqref{aiiiii} playing the role of \eqref{ai}, and with
\begin{equation}
  \label{ho4}
  \int_{-\sqrt{3}/2}^{\sqrt{3}/2} (v'_0)^{2}\ge \frac12\int_{-\sqrt{3}/2}^{\sqrt{3}/2} (v'_0)^{2}
  + v_0^{2}
\end{equation}
playing the role of \eqref{ho3}. (Note that \eqref{ho4} follows trivially from \eqref{febbre2}.) This completes the proof of Theorem \ref{thm Fuglede double-bubble}.
\end{proof}

\appendix

\section{The qualitative stability theorem and a selection principle}\label{appendix soft and select} Here we prove a qualitative stability theorem (Theorem \ref{thm asymptotic stability}) and a selection principle for quantitative stability inequalities (Theorem \ref{thm selection principle part one}) on isoperimetric $N$-clusters in $\R^n$ with $n$ and $N$ arbitrary. These results are not entirely standard because of some compactness issues that need to be handled under a multiple volumes constraint. Such compactness issues are usually simpler to address in dimension $n=2$ (because perimeter controls diameter on indecomposable sets of finite perimeter), and in this paper we only need the above results in the case $N=n=2$. However, Theorem \ref{thm asymptotic stability} is interesting in itself and it is useful knowing its validity in the general case. Theorem \ref{thm selection principle part one}, although of course of more technical nature, should still reveal useful in addressing the quantitative stability problem for double-bubbles in higher dimensions. Moreover, the simplifications one has setting $n=2$ seem not that significant, at least if one exploits the arguments we know to prove these results. For these reasons we have decided to prove these theorems in full generality.

The setting considered in this appendix will be as follows. Given a $N$-cluster $\E_0$ in $\R^n$ one says that $\E_0$ is an {\it isoperimetric cluster} if $P(\E_0)\le P(\E)$ whenever $\vol(\E)=\vol(\E_0)$, and that $\E_0$ is {\it uniquely minimizing} if $P(\E)=P(\E_0)$ and $\vol(\E)=\vol(\E_0)$ imply the existence of an isometry $f:\R^n\to\R^n$ such that $f(\E)=\E_0$, where we have set $f(\E)(h)=f(\E(h))$ for every $h=1,...,N$. For a uniquely minimizing isoperimetric cluster $\E_0$ in $\R^n$, we set
\begin{eqnarray*}
  \M_{0}&=&\big\{\E:\ \E \text{ is an $N$-cluster, }\vol(\E) = \vol(\E_{0})\big\}\,,
  \\
  \de(\E)&=&P(\E)-P(\E_0)\,,
  \\
  \a(\E)&=&\inf\big\{\d(\E,f(\E_0)):\mbox{$f:\R^n\to\R^n$ is an isometry}\big\}\,,
\end{eqnarray*}
where $\d(\E,\F)=(1/2)\,\sum_{h=0}^N|\E(h)\Delta\F(h)|$. Note that if $\E\in\M_0$, then $\de(\E)$ and $\a(\E)$ are both positive unless $\E$ is isometric to $\E_0$. In analogy with the case $N=1$ \cite{fuscomaggipratelli}, one may ask about the validity of a quantitative stability inequality of the form
\begin{equation}
    \label{global stability inequality delta alpha}
    \de(\E)\ge\k\,\a(\E)^2\,,\qquad\forall \E\in\M_0\,,
\end{equation}
for some $\k>0$. As a first step in this direction, one wants to prove the following theorem.

\begin{theorem}
  \label{thm asymptotic stability} If $\E_0$ is a uniquely minimizing isoperimetric cluster in $\R^n$, $n\ge 2$, then for every $\eta>0$ there exists $\de>0$ such that  if $\vol(\E)=\vol(\E_0)$ and $P(\E)\le P(\E_0)+\de$, then $\a(\E)\le\eta$.
\end{theorem}

Once Theorem \ref{thm asymptotic stability} is proved, and following the approach proposed in \cite{CicaleseLeonardi} to address  \eqref{global stability inequality delta alpha} in the case $N=1$, one notices that by a simple contradiction argument \eqref{global stability inequality delta alpha} is equivalent to showing that $\k(\E_0)>0$, where we have set
\begin{equation}
  \label{kappa ottimale}
  \kappa(\E_0)=\inf\big\{\liminf_{k\to\infty}\frac{\de(\E_k)}{\a(\E_k)^2}:\{\E_k\}_{k\in\N}\subset\M_0\,,\a(\E_k)>0\,,\E_k\to\E_0\big\}\,.
\end{equation}
By applying a selection principle to minimizing sequences in  \eqref{kappa ottimale}, one ends up reducing the proof of \eqref{global stability inequality delta alpha} to the case when $\E$ is a $(\Lambda,r_0)$-minimizing cluster in $\R^n$ for some $\Lambda\ge0$ and $r_0>0$ depending on $\E_0$ only. In the case $N=1$, as shown in \cite{CicaleseLeonardi}, this reduction allows one to complete the proof of \eqref{global stability inequality delta alpha} quite easily thanks to a decomposition in spherical harmonics originally introduced by Fuglede \cite{Fuglede}. At the same time, as shown in this paper, this strategy works to prove \eqref{global stability inequality delta alpha} when $N=n=2$. It thus seems interesting to know that one can always attack \eqref{global stability inequality delta alpha} from this angle. More precisely, we have the following result.

\begin{theorem}\label{thm selection principle part one}
 If $\E_0$ is a uniquely minimizing isoperimetric cluster in $\R^n$ with $\k(\E_0)<\infty$, then there exist positive constants $\Lambda$, $r_0$, and $R_0$ and a sequence of $(\Lambda,r_0)$-minimizing clusters $\{\E_k\}_{k\in\N}\subset\M_0$ with
 \[
 \inf_{k\in\N}\a(\E_k)>0\,,\qquad \lim_{k\to\infty}\d(\E_k,\E_0)=0\,,\qquad \lim_{k\to\infty}\frac{\de(\E_k)}{\a(\E_k)^2}=\k(\E_0)\,.
 \]
 Moreover, $\E_k(h)\subset B_{R_0}$ for every $h=1,...,N$, and each $\E_k$ satisfies the global, volume-constrained minimality property
     \begin{equation}\label{C-k}
     P(\E_{k}) \leq P(\F) + 3\,\sqrt{\a(\E_k)}\, \d(\F,\E_{k})\,,\qquad\forall \F\in\M_0\,.
     \end{equation}
\end{theorem}

\begin{remark}
  {\rm The assumption $\k(\E_0)<\infty$ is essentially equivalent to showing the existence of a one-parameter family of clusters $\{\E_t\}_{|t|<\e}$ with $\vol(\E_t)=\vol(\E_0)$, $\a(\E_t)>0$, $P(\E_t)-P(\E_0)\le C\,t^2$, and $\a(\E_t)\ge |t|/C$ for every $|t|<\e$. By Theorem \ref{thm volume fixing} below, it is not difficult to define $\E_t$ satisfying the first three conditions: what is not immediate, however, is proving that $\a(\E_t)\ge |t|/C$. When $N=1$ or $N=2$ (see section \ref{section select principle} for the latter case) one can easily address this point by exploiting the symmetries of the corresponding isoperimetric clusters (balls or standard double-bubbles). For general $N$ one does not expect to have symmetry properties or to explicitly characterize isoperimetric clusters. Nevertheless, it is always true that $\k(\E_0)<\infty$. We shall not further discuss this issue here.}
\end{remark}

We now turn to prove Theorem \ref{thm asymptotic stability} and Theorem \ref{thm selection principle part one}. As explained the issue is the lack of global compactness, and thus of the possible loss of volume at infinity. This can be fixed by exploiting an argument similar to the one used in Almgren's proof \cite{Almgren76} of the existence of isoperimetric clusters for every given volume vector, see also \cite[Chapter 29]{maggiBOOK}. Almgren's argument uses truncations and translations of pieces of the quasi-isoperimetric clusters, so what one needs to do is taking track of what happens to $\a(\E)$ under these operations. The following theorem is a key tool in implementing this strategy. It is a variant of \cite[Proposition VI.12]{Almgren76}, see also \cite[Corollary 29.17]{maggiBOOK}. The necessary modifications with respect to \cite[Corollary 29.17]{maggiBOOK} are described in \cite[Appendix B]{CiLeMaIC1}, so that we omit to give a detailed proof in here.

\begin{theorem}[Volume-fixing variations]\label{thm volume fixing}
  If $\E_0$ is a $N$-cluster in $\R^n$, then there exist positive constants $r_0$, $\e_0$, $R_0$ and $C_0$ (depending on $\E_0$) with the following property. Let $\E$ be a $N$-cluster in $\R^n$ with
  \begin{equation}
        \label{volumefix hp 2}
    \d(\E,\E_0)\le\e_0\,,
  \end{equation}
  and let  $\F$ be a $N$-clusters in $\R^n$ such that either
  \begin{eqnarray}
    \label{volumefix hp 1}
    \bigcup_{h=1}^N\F(h)\Delta\E(h)\cc B_{x,r_0}\,,\qquad\mbox{for some $x\in\R^n$}\,,
  \end{eqnarray}
  or
  \begin{eqnarray}
    \label{volumefix hp 1x}
    \d(\E,\F)\le\om_n\,r_0^n\,,\qquad \bigcup_{h=1}^N\F(h)\Delta\E(h)\subset\R^n\setminus B_R\,,
    \qquad
    \begin{split}&\mbox{if there exists $R>0$ s.t.}
    \\&\bigcup_{h=1}^N\E_0(h)\cc B_R\,.\end{split}
  \end{eqnarray}
  Then there exists a $N$-cluster $\F'$ such that
  \begin{eqnarray}\label{volumefix thesis 1}
    \bigcup_{h=1}^N\F'(h)\Delta\F(h)&\cc&\left\{\begin{split}
      &B_{R_0}\setminus\ov{B_{x,r_0}}\,,&\qquad\mbox{if \eqref{volumefix hp 1} holds}\,,
      \\
      &B_{R}\,,&\qquad\mbox{if \eqref{volumefix hp 1x} holds}\,,
    \end{split}
    \right .
    \\\label{volumefix thesis 2}
    \vol(\F')&=&\vol(\E)\,,
    \\\label{volumefix thesis 3}
    |P(\F')-P(\F)|&\le& C_0\,P(\E)\,|\vol(\F)-\vol(\E)|\,,
    \\\label{volumefix thesis 4}
    |\d(\F',\E)- \d(\F,\E)|&\le&C_0\,P(\E)\,|\vol(\F)-\vol(\E)|\,,
    \\
    \label{volumefix thesis V}
    \sum_{h=0}^N\int_{\F'(h)\Delta\F(h)}J&\le&C_0\,\|J\|_{L^\infty(B_{R_0})}\,P(\E)\,\,|\vol(\F)-\vol(\E)|\,,
  \end{eqnarray}
  for every Borel function $J:\R^n\to[0,\infty)$ which is locally bounded.
\end{theorem}

We now prove Theorem \ref{thm asymptotic stability} and Theorem \ref{thm selection principle part one} for a fixed uniquely minimizing isoperimetric cluster $\E_0$. Thanks to \cite[Theorem 29.1]{maggiBOOK}, there exists $R>0$ such that $\E_0(h)\cc B_R$ for every $h=1,...,N$. Moreover we shall use the obvious inequality
\begin{equation}
    \label{alpha lipx}
      |\a(\E)-\a(\F)|\le\d(\E,\F)\,,\qquad\mbox{for every $N$-clusters $\E$ and $\F$}\,.
  \end{equation}

\begin{proof}[Proof of Theorem \ref{thm asymptotic stability}] The argument has several points in common with \cite[Proof of Theorem 29.1]{maggiBOOK}. Arguing by contradiction, we assume the existence of $\eta_*>0$ and of a sequence $\{\E_k\}_{k\in\N}$ of $N$-clusters such that $\vol(\E_k)=\vol(\E_0)$ for every $k\in\N$ and
\[
\lim_{k\to\infty}P(\E_k)=P(\E_0)\,,\qquad \lim_{k\to\infty}\a(\E_k)=\eta_*\,.
\]
By arguing as in step one of the proof of \cite[Theorem 29.1]{maggiBOOK} we identify for each cluster $\E_k$ a suitable region (constructed as a union of balls of radius $S$, see the right-hand side of \eqref{azz1}) inside of which, in the spirit of Theorem \ref{thm volume fixing}, we can perform volume-fixing variations of $\E_k$ with uniform bounds in $k$. More precisely, there exist positive constants $\e_1$, $C_1$, and $S$, points $\{x_k(h)\}_{k\in\N}\subset\R^n$ ($1\le h\le N$), and $C^1$-maps $\Phi_k:((-\e_1,\e_1)^{N+1}\cap V)\times\R^n\to\R^n$, (here $V=\{\aa\in\R^{N+1}:\sum_{h=0}^N\aa(h)=0\}$) with the property that (up to extracting subsequences in $k$) $\Phi_k(\aa,\cdot)$ is a $C^1$-diffeomorphism on $\R^n$ for every $\aa\in(-\e_1,\e_1)^{N+1}\cap V$, and, moreover, for every $\aa\in (-\e_1,\e_1)^{N+1}\cap V$ and for every $\H^{n-1}$-rectifiable set $\S\subset\R^n$, it holds
\begin{eqnarray}
\label{azz1}
  \big\{x\in\R^n:\Phi_k(\aa,x)\ne x\big\}&\cc& \bigcup_{h=1}^N B(x_k(h),S)\,,
  \\
\label{azz2}
  \big|\Phi_k(\aa,\E_k(h))\big|&=&|\E_k(h)|+\aa(h)\,,
  \\
\label{azz3}
  \big|\H^{n-1}\big(\Phi_k(\aa,\S)\big)-\H^{n-1}(\S)\big|&\le& C_1\,\H^{n-1}(\S)\,|\aa|\,,
  \\
\label{azz4}
  \big|\Phi_k(\aa,\E_k(h))\Delta\E_k(h)\big|&\le& C_1\,P(\E_k(h))\,|\aa|\,.
\end{eqnarray}
Note that \eqref{azz4} is not mentioned in step one of the proof of \cite[Theorem 29.1]{maggiBOOK}, but that it can be easily achieved by exploiting \cite[Lemma B.2]{CiLeMaIC1}. At the same time, by arguing as in step two of the proof of \cite[Theorem 29.1]{maggiBOOK}, we see that there exist positive constants $\e_0$ and $L$ (depending on $\{\E_k\}_{k\in\N}$ only) such that for every $\eta<\e_0$, $k\in\N$, and $h=1,\dots,N$, we can find finitely many points $\{y_k(h,i)\}_{i=1}^{L_k(h)}\subset\R^n$ such that
\begin{eqnarray}
  \label{azz5}
  \Big|\E_k(h)\setminus\bigcup_{i=1}^{L_k(h)}B(y_k(h,i),2)\Big|<\frac{\eta}{N}\,,
  \qquad L_k(h)\le \frac{L}{\eta^n}\,.
\end{eqnarray}
Let us now consider the closed sets
\[
F_k=\bigcup_{h=1}^N\,\ov{B}(x_k(h),S)\cup\bigcup_{i=1}^{L_k(h)}\,\ov{B}(y_k(h,i),2)\,,\qquad k\in\N\,.
\]
Since, by \eqref{azz5},
\begin{equation}
  \label{azz0}
  \sum_{h=1}^N|\E_k(h)\setminus F_k|\le \eta\,,\qquad\forall k\in\N\,,
\end{equation}
the {\it truncation lemma} \cite[Lemma 29.12]{maggiBOOK} guarantees the existence of $r_0\in[0,7n\,\eta^{1/n}]$ such that,
if $I_\e(X)=\{x\in\R^n:\dist(x,X)<\e\}$ denotes the $\e$-neighborhood of $X\subset\R^n$, and if $\{\E'_k\}_{k\in\N}$ are the $N$-clusters defined by
$\E'_k(h)=\E_k(h)\cap I_{r_0}(F_k)$, $1\le h\le N$,
then
\begin{equation}
  \label{azz15}
  P(\E'_k)\le P(\E_k)-\frac{\d(\E'_k,\E_k)}{4\,\eta^{1/n}}\,.
\end{equation}
By \eqref{azz0} we have $\d(\E'_k,\E_k)\le \eta$, so that by \eqref{alpha lipx}
\begin{equation}
  \label{azz6}
  \a(\E_k')\ge\a(\E_k)-\eta\,,\qquad\forall k\in\N\,.
\end{equation}
If we set $\aa_k(h)=|\E_k(h)|-|\E'_k(h)|=|\E_k(h)\setminus I_{r_0}(F_k)|$ for $1\le h\le N$ and $\aa_k(0)=-\sum_{h=1}^N\aa_k(h)$,
and if we require $\eta\le\e_1$, then $\aa_k\in (-\e_1,\e_1)^{N+1}\cap V$ for every $k\in\N$. We may thus define a sequence of clusters $\{\E_k''\}_{k\in\N}$ by setting
\[
\E_k''(h)=\Phi_k(\aa_k,\E_k'(h))\,,\qquad 1\le h\le N\,.
\]
Let us notice that, by \eqref{azz1}, $\Phi_k(x)=x$ in an open neighborhood of $\R^n\setminus F_k$, so that, in fact, $\Phi_k(\aa_k,\E_k'(h))=\Phi_k(\aa_k,\E_k(h))\cap I_{r_0}(F_k)$. Therefore, by  \eqref{azz2}, \eqref{azz3}, \eqref{azz15}, and the definition of the $\aa_k$'s, much as in step two of the proof of \cite[Theorem 29.1]{maggiBOOK}, we obtain that
\begin{eqnarray}\label{azz7}
  \vol(\E_k'')&=&\vol(\E_k)=\vol(\E_0)\,,
  \\\label{azz8}
  P(\E_k'')&\le& P(\E_k)+\Big(4C_1P(\E_0)-\frac1{4\eta^{1/n}}\Big)\,\d(\E'_k,\E_k)\,;
\end{eqnarray}
moreover, this time taking into account \eqref{azz4}, and since $\d(\E_k,\E_k')\le\eta$, we find that
\begin{equation}
  \label{azz9}
  \d(\E_k'',\E_k)\le\eta+C_1\,P(\E_k)\,|\aa_k|\le C_2\,\eta\,,
\end{equation}
where $C_2$ is a constant depending on $\{\E_k\}_{k\in\N}$ only; in particular, by \eqref{azz9} and \eqref{alpha lipx}
\begin{equation}
  \label{azz10}
  \a(\E_k'')\ge\a(\E_k)-C_2\eta\ge\frac{\eta_*}2\,,
\end{equation}
provided $\eta$ is small enough; similarly, up to further decreasing the value of $\eta$, \eqref{azz8} gives us
\begin{eqnarray}\label{azz8bis}
  P(\E_k'')\le P(\E_k)\,,\qquad\forall k\in\N\,.
\end{eqnarray}
Summarizing, by taking into account \eqref{azz7}, \eqref{azz8bis}, and \eqref{azz10} we see that $\{\E_k''\}_{k\in\N}$ satisfies
\begin{equation}
  \label{azz11}
  \lim_{k\to\infty}P(\E_k'')=P(\E_0)\,,\qquad \liminf_{k\to\infty}\a(\E_k'')\ge\frac{\eta_*}2\,;
\end{equation}
moreover, by the definition of $\E_k'$ and $\E_k''$, and thanks to \eqref{azz1},  for every $k\in\N$ we find
\begin{equation*}
  \bigcup_{h=1}^N\E''_k(h)\cc G_k=\ov{I_{2\,r_0}(F_k)}\,,
\end{equation*}
where $G_k$ is a closed set with at most $L_0=L_0(n,N,L,\eta)$ connected components of diameter at most $S_0=S_0(S,r_0,L_0)$ with $r_0\le 7n\eta^{1/n}$. Clearly, the mutual distances between these connected components may tend to infinity or not: in any case we can find $\{z_k^j\}_{j=1}^{M}\subset\R^n$, $1\le M\le L_0$, such that for every $k\in\N$ and $1\le j_1<j_2\le M$ (if $M\ge 2$)
\begin{eqnarray*}
\bigcup_{h=1}^N\E''_k(h)\cc \bigcup_{j=1}^M B(z_k^j,S_0)\,,\qquad\lim_{k\to\infty}|z_k^{j_1}-z_k^{j_2}|=\infty\,.
\end{eqnarray*}
In particular, $\{B(z_k^j,S_0)\}_{j=1}^M$ is a disjoint family of balls if $M\ge 2$ and $k$ is large enough. Let us assume, as we may up to isometries, that $\a(\E_k'')=\d(\E_k'',\E_0)$. Up to relabeling the index $j$ and up to take $k$ large enough, by taking into account $\E_0(h)\cc B_R$ for every $h=1,..,N$, we may ensure that
\begin{eqnarray*}
\a(\E_k'')=\sum_{h=1}^N \Big|\Big(\E_k''(h)\Delta \E_0(h)\Big)\cap B(z_k^1,S_0)\Big|\,,\qquad 0=\sum_{h=1}^N\sum_{j=2}^M\Big|\E_0(h)\cap B(z_k^j,S_0)\Big|\,.
\end{eqnarray*}
(This implies, in particular, that $|z_k^1|\le R+S_0$.) Let us finally consider vectors $\{y_k^j\}_{j=2}^M$ such that the balls $\{B(z_k^j+y_k^j,S_0)\}_{j=2}^M$ lie at mutually positive distance at least $2\,(S_0+R)$ and at most $2\,(S_0+R)\,M$ one from each other and from $B(z_k^1,S_0)$, and define a sequence $\{\E_k'''\}_{k\in\N}$ so that, for $h=1,\dots,N$,
\begin{eqnarray*}
\E_k'''(h)\cap B(z_k^1,S_0)&=&\E_k''(h)\cap B(z_k^1,S_0)\,,
\\
\E_k'''(h)\cap B(z_k^j+y_k^j,S_0)&=&\big(\E_k''(h)\cap B(z_k^j,S_0)\big)+y_k^j\,,\qquad 2\le j\le M\,,\quad
\\
\E_k'''(h)\setminus\Big(B(z_k^1,S_0)\cup\bigcup_{j=2}^MB(z_k^j,S_0)\Big)&=&\emptyset\,.
\end{eqnarray*}
In this way, by construction of $y_k^j$ and since $\E_0\cc B_R$, it must be $\a(\E_k''')=\a(\E_k'')$ for every $k$ large enough, so that
$\liminf_{k\to\infty}\a(\E_k''')\ge\eta_*/2$. At the same time, there exists $Q$ depending on $S_0$, $R$, and $M$ only, such that $\E_k'''\subset B_{Q}$ for every $k\in\N$, so that by $\lim_{k\to\infty}P(\E_k''')=P(\E_0)$, $\vol(\E_k''')=\vol(\E_0)$, and by the standard compactness theorem \cite[Proposition 29.5]{maggiBOOK}, there exists a $N$-cluster $\E_*$ such that, up to extracting subsequences, $\d(\E_k''',\E_*)\to 0$ as $k\to\infty$. Therefore, it holds $\vol(\E_*)=\vol(\E_0)$, $P(\E_*)= P(\E_0)$, and $\a(\E_*)\ge \eta_*/2$, a contradiction to the unique minimality of $\E_0$.
\end{proof}

\begin{proof}[Proof of Theorem \ref{thm selection principle part one}] Let us consider a recovery sequence $\{\F_k\}_{k\in\N}\subset\M_0$ for $\k(\E_0)$, that is
  \begin{equation}
  \label{recovery sequence}
  \inf_{k\in\N}\a(\F_k)>0\,,\qquad \lim_{k\to\infty}\d(\F_k,\E_0)=0\,,\qquad \kappa(\E_0)=\lim_{k\to\infty}\frac{\de(\F_k)}{\a(\F_{k})^2}\,,
  \end{equation}
  and notice that, since $\k(\E_0)<\infty$, we have
    \begin{equation}
    \label{recovery sequence properties}
    \lim_{k\to\infty}\a(\F_k)=0\,,\qquad P(\F_k)=P(\E_0)+\k(\E_0)\,\a(\F_k)^2+o(\a(\F_k)^2)\,.
  \end{equation}
  Without loss of generality, we may assume that, for all $k\in\N$, and for $\b>0$ to be suitably chosen,
\begin{eqnarray}
\label{stime def & asim-1}
P(\F_{k}) \leq P(\E_{0}) + (\k(\E_{0})+1)\, \a(\F_{k})^{2}\,,\qquad
\a(\F_{k}) \leq\beta\,.
\end{eqnarray}
We claim that for every $k$ large enough there exists a minimizer $\E_k$ in the problem
  \begin{equation}
  \label{selection principle k}
  \g_{k}(\E_0)=\inf\big\{P(\E)+|\a(\E) - \a(\F_{k})|^{3/2}:\ \E\in\M_0\big\}\,,
  \end{equation}
and that
\begin{eqnarray}
     \label{varie su Ek-1}
\a(\E_{k})&\ge& \frac{\a(\F_k)}3 \,,\\
    \label{varie su Ek-2}
|\a(\E_{k})-\a(\F_{k})|&\leq&  (\k(\E_{0}) + 1)^{2/3} \, \a(\F_{k})^{4/3}\,,
 \\\label{selezionato limitato}
 \bigcup_{h=1}^N\E_k(h)&\subset& B_{R_0}\,,\qquad R_0=R+7n\beta^{1/n}\,,
 \\
 \label{varie su Ek-3}
 P(\E_k)&=&P(\E_0)+\k(\E_0)\,\a(\E_k)^2+o(\a(\E_k)^2)\,,\qquad\mbox{as $k\to\infty$}\,.
\end{eqnarray}
Indeed, given $k\in\N$, let $\{\E_{k,j}\}_{j\in\N}$ be a minimizing sequence in \eqref{selection principle k}. Since $\F_{k}$ is admissible in \eqref{selection principle k} and by \eqref{stime def & asim-1}, provided $\b$ is small enough, we may assume without loss of generality that
\begin{equation}\label{competitore1}
\left\{\begin{split}
  &P(\E_{k,j}) + |\a(\E_{k,j}) - \a(\F_{k})|^{3/2} \leq P(\F_{k})
  \\
  &P(\E_{k,j})\le P(\E_0)+1
\end{split}\right .\,,\qquad \forall k\,,j\in\N \,.
\end{equation}
By subtracting $P(\E_0)$ in this last inequality, by $P(\E_{k,j})\ge P(\E_0)$, and by \eqref{stime def & asim-1} we thus get
\begin{eqnarray}\label{asymkh1}
|\a(\E_{k,j}) - \a(\F_{k})|^{3/2} \leq(\k(\E_{0})+1)\,\a(\F_{k})^2\,,\qquad \forall k\,,j\in\N\,.
\end{eqnarray}
In particular, provided $\beta$ is small enough, we find
\begin{equation}
    \label{stima asim Ekh-1}
 \frac{\a(\F_{k})}2\le \a(\E_{k,j}) \leq \frac{3}{2}\,\a(\F_{k})\,,\qquad \forall k\,,j\in\N\,.
\end{equation}
We now construct new minimizing sequences $\{\tE_{k,j}\}_{j\in\N}$ for the variational problems \eqref{selection principle k}, with the property that, for some $k_0\in\N$
\begin{equation}
  \label{new mini}
  \bigcup_{h=1}^N\tE_{k,j}(h)\subset B_{R+7n\beta^{1/n}}\,,\qquad\forall j\in\N\,,k\ge k_0\,.
\end{equation}
Indeed, let us assume, as we may do up to isometries, that
\begin{equation}
  \label{antelope}
  \a(\E_{k,j}) =\d(\E_{k,j},\E_{0})\,,\qquad\forall j\,,k\in\N\,.
\end{equation}
For each $k\,,j\in\N$ and $r>0$, we consider the cluster $\E_{k,j}^r(h)=\E_{k,j}(h)\cap B_r$, and correspondingly define a decreasing function $\rho_{k,j}:(0,\infty)\to[0,\infty)$ by setting
 \begin{equation}
   \label{birillo}
    \rho_{k,j}(r)=\d(\E_{k,j},\E_{k,j}^r)=\sum_{h=1}^{N} |\E_{k,j}(h)\setminus B_{r}|\,,\qquad k\,,j\in\N\,,r>0\,.
 \end{equation}
By $\bigcup_{h=1}^N\E_0(h)\cc B_R$, \eqref{antelope}, \eqref{stima asim Ekh-1} and \eqref{stime def & asim-1} we find
\begin{equation}
  \label{rhobeta}
  \rho_{k,j}(R) \leq \frac{\d(\E_{k,j},\E_{0})}2 = \frac{\a(\E_{k,j})}2 \leq \frac{3}4\,\a(\F_{k})\leq \frac{3}4\,\beta\,.
\end{equation}
Thus, by \cite[Lemma 29.12]{maggiBOOK}, there exists $r=r_{k,j}\in[R,R+7n\beta^{1/n}]$ such that
\begin{equation}\label{stima trunc}
P(\E_{k,j}^{r}) \leq P(\E_{k,j}) - \frac{\rho_{k,j}(r)}{4\,\beta^{1/n}}\,,\qquad \forall j\,,k\in\N\,,
\end{equation}
where in order to simplify the notation we have set $\E_{k,j}^{r}=\E_{k,j}^{r_{k,j}}$. Now let $\e_0$, $r_0$, and $C$ be the constants associated with $\E_0$ by Theorem \ref{thm volume fixing}, which we want to apply with the choices $\E=\E_{k,j}$ and $\F=\E_{k,j}^r$. This is possible because by \eqref{antelope}, \eqref{stima asim Ekh-1}, and \eqref{stime def & asim-1}, and provided $\beta$ is small enough, we have $\d(\E_{k,j},\E_0)\le\e_0$, while at the same time $\d(\E_{k,j},\E_{k,j}^r)\le\rho_{k,j}(R)\le\beta\le\om_n\,r_0^n$ and $\E_{k,j}(h)\Delta\E_{k,j}^r(h)\subset\R^n\setminus B_R$, where $R>0$ is such that $\E_0(h)\cc B_R$ for every $h=1,...,N$. By Theorem \ref{thm volume fixing} we thus construct clusters $\tE_{k,j}$ such that
\begin{equation}\label{birillo2}
\begin{split}
  &\vol(\tE_{k,j})=\vol(\E_{k,j})=\vol(\E_0)\,,
  \\
  &|\d(\tE_{k,j},\E_{k,j})-\d(\E_{k,j}^r,\E_{k,j})|\le C\,P(\E_{k,j})\,\rho_{k,j}(r)\,,
  \\
  &P(\tE_{k,j})\le P(\E_{k,j}^r)+C\,P(\E_{k,j})\,\rho_{k,j}(r)\,.
\end{split}
\end{equation}
By \eqref{alpha lipx}, \eqref{birillo2}, \eqref{competitore1}, and \eqref{birillo}  we find
\begin{eqnarray}
\label{perim trunc}
P(\tE_{k,j})-P(\E_{k,j}^{r})+|\a(\tE_{k,j}) - \a(\E_{k,j})|\leq C_1\,\rho_{k,j}(r)\,,
\end{eqnarray}
for some constant $C_1$ depending on $\E_0$ only. By \eqref{stima trunc} and \eqref{perim trunc} we find
\begin{eqnarray}\nonumber
&&P(\tE_{k,j}) + |\a(\tE_{k,j}) - \a(\F_{k})|^{3/2}
\\
&\leq&
P(\E_{k,j})  + |\a(\tE_{k,j}) - \a(\F_{k})|^{3/2}- \Big(\frac{1}{4\beta^{1/n}} -C_{1}\Big)\rho_{k,j}(r)\,,
\label{trunc minimal}
\end{eqnarray}
where, again thanks to \eqref{perim trunc} we have
\begin{eqnarray}\label{busoraro}
  |\a(\tE_{k,j})-\a(\F_{k})|^{3/2}\le\Big(|\a(\E_{k,j})-\a(\F_{k})|+C_1\,\rho_{k,j}(r)\Big)^{3/2}\,.
\end{eqnarray}
If $|\a(\E_{k,j})-\a(\F_{k})|\ge C_1\, \rho_{k,j}(r)$, then, by noticing that $(1+a)^{3/2} \leq 1+2a$ for every $a\in[0,1]$,
\begin{eqnarray}\nonumber
  |\a(\tE_{k,j})-\a(\F_{k})|^{3/2}&\le&|\a(\E_{k,j})-\a(\F_{k})|^{3/2}
  \Big(1+\frac{2C_1\,\rho_{k,j}(r)}{|\a(\E_{k,j})-\a(\F_{k})|}\Big)
  \\\nonumber
  &\le&|\a(\E_{k,j})-\a(\F_{k})|^{3/2}+2\,C_1\,\sqrt{\a(\F_k)}\,\rho_{k,j}(r)
  \\\label{busoo1}
  &\le&|\a(\E_{k,j})-\a(\F_{k})|^{3/2}+C_2\,\sqrt{\beta}\,\rho_{k,j}(r)\,,
\end{eqnarray}
thanks to \eqref{stime def & asim-1}, and for a constant $C_2$ depending on $\E_0$ only; if $|\a(\E_{k,j})-\a(\F_{k})|\le C_1\, \rho_{k,j}(r)$, then by \eqref{busoraro}, and up to possibly increasing the value of $C_2$, we simply find
\begin{eqnarray}\label{busoo2}
  |\a(\tE_{k,j})-\a(\F_{k})|^{3/2}\le\big(2\,C_1\,\rho_{k,j}(r)\big)^{3/2}\le C_2\,\sqrt\beta\,\rho_{k,j}(r)\,,
\end{eqnarray}
where we have used again \eqref{rhobeta} and the fact that $\rho_{k,j}$ is decreasing. We finally combine \eqref{trunc minimal}, \eqref{busoo1}, and \eqref{busoo2}, to conclude that, if $\beta$ is suitably small (in terms of $C_1$, $C_2$ and $n$), then
\begin{eqnarray}\nonumber
&&P(\tE_{k,j}) + |\a(\tE_{k,j}) - \a(\F_{k})|^{3/2}
\\
&\leq&
\label{busoo78}
P(\E_{k,j})  + |\a(\E_{k,j}) - \a(\F_{k})|^{3/2}- \Big(\frac{1}{4\beta^{1/n}} -C_{1}-C_2\,\sqrt{\beta}\Big)\rho_{k,j}(r)
\\\label{busoo3}
&\leq&
P(\E_{k,j})  + |\a(\E_{k,j}) - \a(\F_{k})|^{3/2}\,.
\end{eqnarray}
By \eqref{busoo3} and \eqref{new mini}, for every $k\in\N$, we find that $\{\tE_{k,j}\}_{j\in\N}\subset\M_0$ is a minimizing sequence in \eqref{selection principle k}, uniformly bounded in space. By the Direct Method (see, e.g. \cite[Propositons 29.4 and 29.5]{maggiBOOK}), up to possibly extracting a subsequence in $j$, there exist minimizers $\E_k$ in \eqref{selection principle k} such that $\d(\tE_{k,j},\E_{k})\to 0$ as $j\to \infty$. If we denote by $C_3$ the positive constant appearing in front of $-\rho_{k,j}(r)$ in \eqref{busoo78}, then by \eqref{busoo78}, \eqref{competitore1}, and \eqref{recovery sequence properties}, we find
\begin{eqnarray}\label{buso781}
  &&P(\E_0)\le P(\tE_{k,j})+ |\a(\tE_{k,j}) - \a(\F_{k})|^{3/2}+C_3\,\rho_{k,j}(r)\le P(\F_k)
  \\\label{buso782}
  &&\hspace{1.1cm}= P(\E_0)+\k(\E_0)\,\a(\F_k)^2+o(\a(\F_k)^2)\,.
\end{eqnarray}
By subtracting $P(\E_0)$, we can thus find $k_0\in\N$ such that, if $k\ge k_0$, then
\[
\sup_{h\in\N}\rho_{k,j}(r)\le \frac{(\k(\E_0)+1)}{C_3}\,\a(\F_k)^2\le\frac{\a(\F_k)}{6\,C_1}\,,
\]
possibly up to further decreasing the value of $\beta$. Correspondingly, by \eqref{perim trunc} and by the lower bound in \eqref{stima asim Ekh-1}, we find that
\[
\a(\tE_{k,j})\ge\a(\E_{k,j})-\frac{\a(\F_k)}{6}\ge \frac{\a(\F_k)}3\,, \qquad\forall j\in\N\,,k\ge k_0\,,
\]
so that \eqref{varie su Ek-1} follows by letting $j\to\infty$ and by using \eqref{alpha lipx}. By a similar argument we see that \eqref{buso781} and \eqref{stime def & asim-1} give us
\begin{equation}\label{loco2}
|\a(\tE_{k,j}) - \a(\F_{k})|^{3/2} \leq (\k(\E_{0})+1)\,\a(\F_{k})^2\,,\qquad\forall j,k\in\N\,.
\end{equation}
Thus \eqref{varie su Ek-2} follows by letting $j\to\infty$ in \eqref{loco2}, while \eqref{selezionato limitato} follows by letting $j\to\infty$ in \eqref{new mini}. By \eqref{buso781} and \eqref{buso782} we also see that
\[
P(\tE_{k,j})=P(\E_0)+\k(\E_0)\,\a(\F_k)^2+o(\a(\F_k)^2)=P(\E_0)+\k(\E_0)\,\a(\E_k)^2+o(\a(\E_k)^2)\,,
\]
where $\a(\E_k)/\a(\F_k)\to 1$ as $k\to\infty$ thanks to \eqref{asymkh1} and $\d(\tE_{k,j},\E_{k})\to 0$ as $j\to \infty$. Since $\liminf_{j\to\infty}P(\tE_{k,j})\ge P(\E_k)\ge P(\E_0)$ we deduce \eqref{varie su Ek-3}. We have thus completed the proof of the existence of minimizers $\E_k$ in \eqref{selection principle k} satisfying \eqref{varie su Ek-1}--\eqref{varie su Ek-3}.

We now prove that \eqref{C-k} holds for $k\ge k_0$. Indeed, if $\F\in\M(\E_0)$, then by minimality of $\E_{k}$ in \eqref{selection principle k} we have
\begin{equation}
  \label{busogol}
  P(\E_k)+|\a(\E_k)-\a(\F_k)|^{3/2}\le P(\F)+|\a(\F)-\a(\F_k)|^{3/2}\,.
\end{equation}
Since $|a^{3/2}-b^{3/2}|\le (3/2)\sqrt{\max\{a,b\}}|b-a|$ for every $a,b\ge 0$, we easily find that
\begin{equation}
  \label{busoautogol}
  |\a(\F)-\a(\F_k)|^{3/2}-|\a(\E_k)-\a(\F_k)|^{3/2}\le \frac32\,\sqrt{\a(\F_k)}\,|\a(\E_k)-\a(\F)|\,.
\end{equation}
We thus prove \eqref{C-k} by combining \eqref{busogol}, \eqref{busoautogol}, \eqref{varie su Ek-1}, and \eqref{alpha lipx}. We are left to prove that each $\E_{k}$ is a $(\Lambda,r_{0})$-perimeter minimizer, for some constants depending on $\E_0$ only. Indeed, let $\e_0$, $r_0$, and $C$ be the constants associated to $\E_0$ by Theorem \ref{thm volume fixing}. By \eqref{varie su Ek-2} and \eqref{stime def & asim-1}, up to further decreasing the value of $\beta$, we may assume that $\a(\E_{k})\le\e_{0}$ for all $k\in\N$, so that, up to isometries, we may assume that $\a(\E_{k})=d(\E_{k},\E_{0})\le \e_0$ for every $k\in\N$. Now we choose $x\in \R^{n}$ and an $N$-cluster $\F$ such that $\F(h)\Delta\E_{k}(h)\cc B(x,r_{0})$ for $h=1,...,N$. By applying Theorem \ref{thm volume fixing} with $\E=\E_{k}$, and up to further decreasing the value of $\beta$ to entail $P(\E_{k})\le2\,P(\E_{0})$, we construct a cluster $\F'$ satisfying $\F'(h)\Delta\F(h)\cc\R^n\setminus\ov{B(x,r_0)}$, $\vol(\F')=\vol(\F)$ and
\[
\max\big\{|P(\F')-P(\F)|,|\d(\F',\E_{k})- \d(\F,\E_{k})|\big\}\le 2C\,P(\E_{0})\,|\vol(\F)-\vol(\E_k)|\,,
\]
By exploiting these properties and \eqref{C-k}, and since $|\vol(\F) - \vol(\E_k)| \leq \d(\F,\E_{k})$, we thus find
\begin{eqnarray*}
  P(\E_{k}) &\leq& P(\F') + 3\sqrt{\a(\E_k)}\, \d(\F',\E_{k})
  \\
  &\leq& P(\F) + 2C\,P(\E_{0})\,(1+3\sqrt{\a(\E_k)})\,|\vol(\F)-\vol(\E_{0})|+3\sqrt{\a(\E_k)}\, \d(\F,\E_{k})
  \\
  &\le& P(\F)+\Lambda\,\d(\F,\E_k)\,,
\end{eqnarray*}
for a suitable value of $\Lambda$ determined by $\E_0$ only.
\end{proof}

\bibliography{../references}
\bibliographystyle{is-alpha}

\end{document}